\DeclareMathOperator*{\tend}{\longrightarrow}
\DeclareMathOperator*{\D}{\rm{div}}
\theoremstyle{definition}
\newtheorem{defi}{Definition}[section]
\newtheorem{rmk}[defi]{Remark}
\theoremstyle{plane}
\newtheorem{thm}[defi]{Theorem}
\newtheorem{prop}[defi]{Proposition}
\newtheorem{cor}[defi]{Corollary}
\newtheorem{lemma}[defi]{Lemma}
\newcommand{\tbf}{\textbf}
\newcommand{\tsl}{\textsl}
\newcommand{\mbb}{\mathbb}
\newcommand{\mc}{\mathcal}
\newcommand{\veps}{\varepsilon}
\newcommand{\what}{\widehat}
\newcommand{\vphi}{\varphi}
\newcommand{\ra}{\rightarrow}
\renewcommand{\k}{\kappa}
\renewcommand{\t}{\tau}
\newcommand{\de}{\delta}
\newcommand{\R}{\mathbb{R}}
\newcommand{\C}{\mathbb{C}}
\newcommand{\N}{\mathbb{N}}
\newcommand{\Z}{\mathbb{Z}}
\newcommand{\T}{\mathbb{T}}
\renewcommand{\P}{\mathbb{P}}
\renewcommand{\div}{{\rm div}\,}
\newcommand{\curl}{{\rm curl}\,}
\newcommand{\Id}{{\rm Id}\,}
\newcommand{\Supp}{{\rm Supp}\,}
\newcommand{\dt}{ \, {\rm d} t}
\newcommand{\B}{B^s_{\infty, r}}
\newcommand{\al}{\alpha}
\newcommand{\bt}{\beta}
\def\d{\partial}
\def\div{{\rm div}\,}
\begin{document}

\newcommand{\dimitri}[1]{\textcolor{red}{[***DC: #1 ***]}}
\newcommand{\fra}[1]{\textcolor{blue}{[***FF: #1 ***]}}

\title{\textsc{\Large{\textbf{Symmetry breaking in ideal magnetohydrodynamics: \\ the role of the velocity}}}}

\author{\normalsize\textsl{Dimitri Cobb}$\,^1\qquad$ and $\qquad$
\textsl{Francesco Fanelli}$\,^{2}$ \vspace{.5cm} \\
\footnotesize{$\,^{1,} \,^2\;$ \textsc{Universit\'e de Lyon, Universit\'e Claude Bernard Lyon 1}}  \vspace{.1cm} \\
{\footnotesize \it Institut Camille Jordan -- UMR 5208}  \vspace{.1cm}\\
{\footnotesize 43 blvd. du 11 novembre 1918, F-69622 Villeurbanne cedex, FRANCE} \vspace{.2cm} \\
\footnotesize{$\,^{1}\;$\ttfamily{cobb@math.univ-lyon1.fr}}, $\;$
\footnotesize{$\,^{2}\;$\ttfamily{fanelli@math.univ-lyon1.fr}}
\vspace{.2cm}
}

\date\today

\maketitle

%%%%%%%%%%%%%%%%%%%%%%%%%%%%%%%%%%%%%%%%%%%%%%%%%%%%%%%%%%%%%%%%%%%%%%%%%%%%%%%%%%%%%%%
\subsubsection*{Abstract}
{\footnotesize
The ideal magnetohydrodynamic equations are, roughly speaking, a quasi-linear symmetric hyperbolic system of PDEs, but %. In the present paper, we highlight the fact that
not all the unknowns play the same role in this system. Indeed, in the regime of small magnetic fields, the equations are close to the incompressible Euler equations.
In the present paper, we adopt this point of view to study questions linked with the lifespan of strong solutions to the ideal magnetohydrodynamic equations. %\fra{Stop here?}
First of all, we prove a continuation criterion in terms of the velocity field only. Secondly, we refine the explicit lower bound for the lifespan
of $2$-D flows found in \cite{CF3}, by relaxing the regularity assumptions on the initial magnetic field.
}

\paragraph*{\small 2010 Mathematics Subject Classification:}{\footnotesize 35Q35 % PDEs / Eq. of math. phys. / PDEs in connection with fluid mechanics
(primary);
76W05, % Fluid Mechanics /  Magnetohydrodynamics and electrohydrodynamics /  Magnetohydrodynamics and electrohydrodynamics
35B60, % PDE / qualitative properties / Continuation and prolongation of solutions to PDEs
76B03 % Fluid Mechanics / Incompressible inviscid fluid / Well-posedness and regularity theory
(secondary).}

\paragraph*{\small Keywords: }{\footnotesize ideal MHD; velocity field; Els\"asser variables; blow-up criterion; lifespan.}

\section{Introduction}\label{s:intro}

In this paper, we are concerned with ideal incompressible magnetohydrodynamics (MHD for short), which are governed by the following system of equations:
\begin{equation}\label{ieq:mhd}
\begin{cases}
\partial_t u + (u \cdot \nabla)u + \nabla \left( \Pi + \dfrac{1}{2} |b|^2 \right) = (b \cdot \nabla) b\\[1ex]
\partial_t b + (u \cdot \nabla)b - (b \cdot \nabla)u = 0\\[1ex]
\D(u) = 0\,.
\end{cases}
\end{equation}
These equations describe the motion of an ideal incompressible magnetofluid, that is an inviscid, perfectly conducting and  incompressible fluid which is subject to a self-generated magnetic field.
We set these equations on the whole $d$-dimensional space $\R^d$. The vector fields $u, b: \R \times \R^d \tend \R^d$ are, respectively, the velocity and magnetic fields of the fluid,
while the scalar quantity $\Pi: \R \times \R^d \tend \R$ is the hydrodynamic pressure.

Since the 1930s, this set of equations has been the subject of intense studies by physicists and, for the past thirty years, by mathematicians. Amongst the many problems that have been explored (wave propagation, magnetofluid topology, stationary solutions and their stability,
\textsl{etc}.), the well-posedness theory has proven to be one of the most challenging, as it is still unknown whether system \eqref{ieq:mhd} possesses global solutions, even in the case of
two dimensions of space. This contrasts very much with the case of the incompressible Euler equations, with no magnetic field, where the existence and uniqueness of a global regular solution
has been proved for planar solutions $d = 2$. The presence of the magnetic field makes all the methods that work for the Euler system inoperable for ideal MHD.

In this article, we extend some results we previously obtained in \cite{CF3} regarding the lifespan of Besov-Lipschitz solutions.
In doing so, we will highlight the fact that the velocity field $u$ plays a special role in \eqref{ieq:mhd}, in that we will require different levels of regularity on $u$ and $b$ to prove a
continuation criterion and a lower bound on the lifespan of solutions.
We point out that this type of results owes to the particular nature of ideal MHD, as it reaches further than the standard theory of quasi-linear symmetric hyperbolic systems,
where all unknowns must recieve a similar treatment due to the symmetric nature of the system.

\medbreak
To begin with, let us present, in the next subsection, some generalities on the ideal MHD system \eqref{ieq:mhd}; there, we will also recall some well-posedness results obtained in \cite{CF3}. Those results
constitute the starting point of the present analysis.

\subsection{Some insights on the ideal MHD}\label{ss:els}

Roughly speaking, equations \eqref{ieq:mhd} can be viewed as a first-order quasi-linear symmetric hyperbolic system (of course, this is not completely correct, due to the presence of the pressure term).
Thus, it is natural to study well-posedness questions in a functional framework based on finite energy conditions.

However, the structure of the equations is much richer than that, as may be highlighted by a change of unknowns. Specifically, by introducing the so-called
\emph{Els\"asser variables}
\begin{equation} \label{eq:Els-var}
\al = u + b \qquad \quad \text{ and } \qquad \quad \bt = u - b\,,
\end{equation}
the ideal MHD system \eqref{ieq:mhd} can be recasted into the following system of transport equations:
\begin{equation}\label{eq:els}
\begin{cases}
\partial_t \al + (\bt \cdot \nabla) \al + \nabla \pi_1 = 0 \\[1ex]
\partial_t \bt + (\al \cdot \nabla) \bt + \nabla \pi_2 = 0\\[1ex]
\D(\al) = \D(\bt) = 0.
\end{cases}
\end{equation}
In the above, $\pi_1$ and $\pi_2$ are two possibly distinct scalar functions, which enforce the two independent divergence-free conditions $\D(\al) = 0$ and $\D (\bt) = 0$.
While it is clear that all solutions of the ideal MHD system also solve \eqref{eq:els}, with in addition $\nabla\pi_1=\nabla\pi_2=\nabla(\Pi+|b|^2/2)$,
the converse is not, in general, true without imposing some kind of condition the solutions
must satisfy at infinity $|x| \tend + \infty$. For instance, if the solution $(\al,\bt)$ of \eqref{eq:els} lies in some $L^p$ space, where $1\leq p<+\infty$,
then it can be shown that $(u,b)$, obtained inverting transformation \eqref{eq:Els-var}, solves \eqref{ieq:mhd}.
We refer to the discussion in Section 4 of \cite{CF3} for more on this issue; see also \cite{Cobb} for the statement of a sharp equivalence result.

To the best of our knowledge, the Els\"asser formulation \eqref{eq:els} of the ideal MHD equations was involved, in a way or another, in all well-posedness results for system \eqref{ieq:mhd}
obtained so far, starting from the the very first works of Schmidt \cite{Sch} and Secchi \cite{Secchi}. We refer \tsl{e.g.} to \cite{C-K-S}, \cite{C-M-Z}, \cite{BBV}, \cite{CF3}
and references therein for more recent studies.
It should be noted that equations \eqref{eq:els} are basically a system of transport equations. This makes it possible to propagate integrability assumptions other than $L^2$,
and to solve the ideal MHD system (provided the equivalence between equations \eqref{ieq:mhd} and \eqref{eq:els} holds) in spaces based on $L^p$ conditions, for any $p\in\,]1,+\infty]$.

The previous observation was used in \cite{CF3} to solve the ideal MHD system in endpoint Besov spaces $\B$ included in the space of globally Lipschitz functions.
% , with either $s>1$ and $r\in[1,+\infty]$, or $s=r=1$.
The result is given in the next statement (see Theorems 2.1 and 2.4 in \cite{CF3}). We remark that the finite energy condition on the solutions is there, mainly to guarantee the equivalence
between the original system \eqref{ieq:mhd} and its Els\"asser formulation \eqref{eq:els}. We refer to Theorem 4.3 in \cite{CF3} for more details about that issue.

\begin{thm} \label{t_i:WP}
Let $d\geq2$. Let $(s,r)\in\R\times[1,+\infty]$ satisfy either $s>1$, or $s=r=1$. Then the ideal MHD system \eqref{ieq:mhd} is well-posed, locally in time,
in the space
$$
\mbb X^s_r\,:=\,\left\{(u,b)\in\left(\B(\R^d)\right)^2\;\Big|\quad \div(u)\,=\,\div(b)\,=\,0\qquad\mbox{ and }\qquad u\,,\,b\;\in\,L^2(\R^d)\; \right\}\,,
$$
and there exists a $T>0$ such that the flow map $t\mapsto \big(u(t),b(t)\big)$ belongs to $C^0\big([0,T];\mbb X^s_r\big)$ if $1\leq r<+\infty$, to $C^0_w\big([0,T];\mbb X^s_\infty\big)$ if $r=+\infty$.
In addition, %let $(u,b)$ be a solution of system \eqref{ieq:mhd} on $[0,T[\,\times\R^d$, for some $T>0$, with $\big(u(t),b(t)\big)\in\mbb X^s_r$ for all $t\in[0,T[\,$.
%If
if $T<+\infty$ and
\begin{equation} \label{eq:cont-Lip}
\int_0^{T} \Big( \big\| \nabla u(t) \big\|_{L^\infty} + \big\| \nabla b(t) \big\|_{L^\infty} \Big) \dt < +\infty\,,
\end{equation}
%and $T<+\infty$,
then $(u,b)$ can be continued beyond $T$ into a solution of \eqref{ieq:mhd} with the same regularity.
\end{thm}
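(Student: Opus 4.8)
The plan is to work through the Els\"asser reformulation \eqref{eq:els}, in which each variable is transported by the other, and to run the standard programme for quasi-linear transport systems in the borderline Besov setting. First I would recover the pressures from the divergence-free constraints: applying $\D$ to the first equation of \eqref{eq:els} and using $\D(\al)=\D(\bt)=0$ gives $\Delta\pi_1=-\D\big((\bt\cdot\nabla)\al\big)$, so that $\nabla\pi_1=-\mathbb{Q}\big((\bt\cdot\nabla)\al\big)$ with $\mathbb{Q}=\nabla(-\Delta)^{-1}\D$ the projector onto gradient fields (and symmetrically for $\pi_2$). The crucial structural observation is that $\mathbb{Q}$ is a matrix of homogeneous degree-zero Fourier multipliers: it is bounded on the homogeneous dyadic blocks of $\B$, while its action on the lowest frequencies is harmless because the finite-energy constraint places those in $L^2$, on which $\mathbb{Q}$ is a bounded projector. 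This is precisely where the $L^2$ condition defining $\mbb X^s_r$ enters, and it yields the quadratic pressure bound $\|\nabla\pi_1\|_{\B}+\|\nabla\pi_1\|_{L^2}\lesssim(\|\al\|_{\B}+\|\al\|_{L^2})(\|\bt\|_{\B}+\|\bt\|_{L^2})$.

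With the pressure under control, the whole analysis rests on one estimate: the transport inequality in $\B$, in the borderline form valid for $s>1$ or $s=r=1$, in which the growth rate is measured by the $L^\infty$ norm of the advecting gradient alone,
\[
\|f(t)\|_{\B}\le\Big(\|f_0\|_{\B}+\int_0^t\|g\|_{\B}\dt\Big)\exp\Big(C\int_0^t\|\nabla v\|_{L^\infty}\dt\Big),
\]
for $\partial_t f+(v\cdot\nabla)f=g$ with $\D(v)=0$. Applied to \eqref{eq:els} with $v=\bt$, $g=-\nabla\pi_1$ (and symmetrically), and using the embedding $\B\hookrightarrow\mathrm{Lip}$ to bound $\|\nabla v\|_{L^\infty}\lesssim\|v\|_{\B}$ together with the quadratic pressure bound, this closes into a Riccati inequality $E'(t)\lesssim E(t)^2$ for $E(t)=\|\al(t)\|_{\B}+\|\bt(t)\|_{\B}$, giving a uniform bound on an interval $[0,T]$ with $T\sim E(0)^{-1}$. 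For the energy, since $\D(\al)=\D(\bt)=0$ both the advection term and the pressure term are $L^2$-orthogonal to the solution, so the $L^2$ norms of $\al$ and $\bt$ are exactly conserved; together these provide a uniform bound in $\mbb X^s_r$.

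Existence then follows from a standard approximation procedure — Friedrichs mollification of the data and the equations, or a linear iteration in which the advecting field is frozen at the previous step — for which the same estimate produces iterates uniformly bounded in $C^0([0,T];\mbb X^s_r)$. I would obtain convergence in the weaker stable norm, showing that the iterates are Cauchy in $L^2$ (equivalently in a negative-regularity Besov space) via an energy estimate on their differences, whose transport coefficients have bounded Lipschitz norm; interpolating this strong low-norm convergence against the uniform high-norm bound yields a limit solution in $\mbb X^s_r$. The identical difference estimate, applied to two arbitrary solutions, gives uniqueness. The time-continuity statement then reflects the usual dichotomy for $L^\infty$-based Besov scales: for $1\le r<+\infty$ the space is separable and the weak continuity coming from the bound upgrades to strong continuity $C^0([0,T];\mbb X^s_r)$ by the standard approximation/commutator argument, while for $r=+\infty$ only weak-$*$ continuity $C^0_w$ survives. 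Finally, the equivalence between the Els\"asser system \eqref{eq:els} and the original formulation \eqref{ieq:mhd}, which requires the finite-energy condition built into $\mbb X^s_r$, is provided by Theorem~4.3 of \cite{CF3}.

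The continuation criterion is a direct reading of the same transport estimate through Gr\"onwall's lemma: if $T<+\infty$ and \eqref{eq:cont-Lip} holds, then since $\nabla\al=\nabla u+\nabla b$ and $\nabla\bt=\nabla u-\nabla b$ the factor $\exp\big(C\int_0^t(\|\nabla\al\|_{L^\infty}+\|\nabla\bt\|_{L^\infty})\dt\big)$ stays bounded as $t\to T^-$, so $E(t)$ does not blow up; restarting the local existence result from a time close to $T$, with data of controlled $\mbb X^s_r$-norm, continues the solution beyond $T$. The principal obstacle throughout is the borderline regularity. The whole scheme hinges on the transport estimate holding with growth rate measured only by $\|\nabla v\|_{L^\infty}$ — it is this feature that both closes the Riccati inequality and makes the purely Lipschitz condition \eqref{eq:cont-Lip} a genuine continuation criterion — and this requires the sharp commutator estimate valid exactly in the range $s>1$ or $s=r=1$, where the generic logarithmic loss is absent. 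The second delicate point is the pressure: in the $L^\infty$-based space $\B$ the operator $\mathbb{Q}$ fails to be bounded on the low frequencies, and it is the $L^2$ component of $\mbb X^s_r$ that repairs this defect, so that the two pieces of the norm must be propagated together and made to cooperate uniformly.
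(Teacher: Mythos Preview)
The paper does not contain a proof of this statement: Theorem~\ref{t_i:WP} is quoted in the introduction from the authors' earlier work, with the explicit attribution ``see Theorems~2.1 and~2.4 in \cite{CF3}''. There is therefore no proof in the present paper to compare your proposal against.

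That said, your outline is a correct and standard route to the result, and it is worth noting one point of contrast with what the paper tells us about the original argument. The authors remark (Subsection~\ref{ss:goals}) that the proofs in \cite{CF3} were ``based on recasting the equations in the vorticity formulation'', whereas the present paper works instead with the Leray-projected system. Your proposal follows the latter philosophy: you eliminate the pressure by writing $\nabla\pi_1=-\mathbb{Q}\big((\bt\cdot\nabla)\al\big)$ and then run transport estimates directly on $\al,\bt$. This is perfectly viable --- indeed it is closer in spirit to how the \emph{present} paper handles its new results --- but it is apparently not the approach of \cite{CF3}, which would instead take the curl of \eqref{eq:els}, propagate the resulting ``vorticities'' $\curl(\al)$ and $\curl(\bt)$ in $B^{s-1}_{\infty,r}$, and recover $\al,\bt$ via the Biot--Savart law combined with the $L^2$ energy bound. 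Both routes hinge on the same two ingredients you correctly identify as delicate: the sharp commutator estimate for transport in the borderline range $s>1$ or $s=r=1$ (so that the growth factor involves only $\|\nabla v\|_{L^\infty}$), and the use of the finite-energy constraint to control the low frequencies where the zero-order Fourier multipliers (be it $\mathbb{Q}$ or the Biot--Savart kernel) are not bounded on $L^\infty$. Your treatment of the continuation criterion via Gr\"onwall is likewise the standard one.
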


Solving in critical spaces becomes particularly important in the case of space dimension $d=2$, because, in that setting, one can show an improved lower bound on the lifespan
of the solutions. We point out that this bound does not rely on classical quasi-linear hyperbolic theory, but is really tied to the special structure of the equations.
The precise estimate is contained in the following statement (this corresponds to Theorem 2.6 of \cite{CF3})
\begin{thm} \label{t_i:lifespan}
Consider an initial datum $\big(u_0, b_0\big)$ such that $u_0, b_0 \in L^2(\R^2)\cap B^2_{\infty,1}(\R^2)$, with $\div(u_0)=\div(b_0)=0$.
Then, the lifespan $T>0$ of the corresponding solution $(u, b)$ of the $2$-D ideal MHD problem \eqref{ieq:mhd}, given by Theorem \ref{t_i:WP},
enjoys the following lower bound:
\begin{equation*}
T\,\geq\,\frac{C}{\big\| (u_0, b_0) \big\|_{L^2 \cap B^2_{\infty, 1}}}
\log \left\{ 1 + C\,\log \left[ 1 + C\,\log \left( 1 + C\,\frac{\big\|  (u_0, b_0) \big\|_{L^2 \cap B^1_{\infty, 1}}}{\|b_0\|_{B^1_{\infty, 1}}} \right) \right] \right\},
\end{equation*}
where $C>0$ is a ``universal'' constant, independent of the initial datum.
\end{thm}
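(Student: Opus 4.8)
The plan is to turn the lifespan bound into an a priori control of the time integral of the Lipschitz norm, and then to exploit the two-dimensional structure together with the smallness of the magnetic field. By Theorem~\ref{t_i:WP} and its continuation criterion~\eqref{eq:cont-Lip}, the solution persists in $B^2_{\infty,1}$ as long as $\int_0^t\big(\|\nabla u\|_{L^\infty}+\|\nabla b\|_{L^\infty}\big)\,d\tau$ remains finite, so it suffices to prove that this integral cannot diverge before the announced time $T$. I would work with the Els\"asser fields $\al=u+b$, $\bt=u-b$ and their scalar curls $\Omega^\pm=\curl\al,\ \curl\bt$ (equivalently the vorticity $\omega=\curl u$ and the current $j=\curl b$); the $L^2$ assumption guarantees, as recalled above, that solving~\eqref{eq:els} is equivalent to solving~\eqref{ieq:mhd}. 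I would first record the two facts that survive the magnetic coupling: the energy identity $\|u(t)\|_{L^2}^2+\|b(t)\|_{L^2}^2=\|u_0\|_{L^2}^2+\|b_0\|_{L^2}^2$, which freezes the low frequencies, and the induction bound $\|b(t)\|_{L^\infty}\le\|b_0\|_{L^\infty}\exp\!\big(C\!\int_0^t\|\nabla u\|_{L^\infty}\big)$, which keeps $b$ small as long as the velocity Lipschitz integral is under control.

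The decisive structural step is to take the $\curl$ of the two equations in~\eqref{eq:els}, which annihilates the pressure gradients, yielding
\[
\partial_t\Omega^+ + (\bt\cdot\nabla)\Omega^+ = -Q(\nabla\bt,\nabla\al),\qquad \partial_t\Omega^- + (\al\cdot\nabla)\Omega^- = -Q(\nabla\al,\nabla\bt),
\]
with the bilinear form $Q(\nabla v,\nabla w)=\partial_1 v_k\,\partial_k w_2-\partial_2 v_k\,\partial_k w_1$. Because $Q(\nabla v,\nabla v)=(\curl v)(\div v)=0$ on divergence-free fields, inserting $\al=u+b$ and $\bt=u-b$ collapses the right-hand sides to $\pm\big(Q(\nabla u,\nabla b)-Q(\nabla b,\nabla u)\big)$: \emph{every} forcing term carries exactly one factor of $\nabla b$. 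Hence $\tfrac{d}{dt}\|\Omega^\pm\|_{L^\infty}\le C\,\|\nabla u\|_{L^\infty}\|\nabla b\|_{L^\infty}$, so that $\|\omega\|_{L^\infty}$ and $\|j\|_{L^\infty}$ can grow only through the small factor $\|\nabla b\|_{L^\infty}$. This is the quantitative expression of the symmetry breaking between $u$ and $b$: the magnetic field enters the growth mechanism only through its first derivatives, i.e. at the $B^1_{\infty,1}$ level, which is precisely what will let the smallness governing the lifespan be measured by $\|b_0\|_{B^1_{\infty,1}}$.

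I would then close the estimates with three standard ingredients: the two-dimensional logarithmic (Brezis--Gallou\"et / Beale--Kato--Majda) interpolation $\|\nabla u\|_{L^\infty}\lesssim\|u\|_{L^2}+\|\omega\|_{L^\infty}\log\!\big(e+\|u\|_{B^2_{\infty,1}}\big)$ and its analogue for $b$ in terms of $j$; the endpoint transport estimate $\tfrac{d}{dt}\|(u,b)\|_{B^2_{\infty,1}}\lesssim\big(\|\nabla u\|_{L^\infty}+\|\nabla b\|_{L^\infty}\big)\|(u,b)\|_{B^2_{\infty,1}}$, in which the pressure is recovered as a Calder\'on--Zygmund operator acting on $\al\otimes\bt$ and absorbed using the boundedness of such operators on the endpoint Besov space; and the curl inequalities above. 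Substituting the interpolations into the transport and curl inequalities produces a closed system of differential inequalities for the hierarchy $\big(\|j\|_{L^\infty},\,\|\omega\|_{L^\infty},\,\|(u,b)\|_{B^2_{\infty,1}}\big)$, in which the current starts at size $\|j_0\|_{L^\infty}\lesssim\|b_0\|_{B^1_{\infty,1}}$ and each tier forces the next with a single logarithmic loss.

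The heart of the proof — and the step I expect to be the main obstacle — is to integrate this coupled system so as to extract the precise nested-logarithm lifespan. The mechanism is that, as long as $\|j\|_{L^\infty}$ stays small, the vorticity $\|\omega\|_{L^\infty}$ remains comparable to its initial value (the Euler-like regime), while $\|j\|_{L^\infty}$ itself is driven only by the product $\|\nabla u\|_{L^\infty}\|\nabla b\|_{L^\infty}$; solving the resulting chain of Gronwall/Osgood inequalities, each logarithmic interpolation contributes one further logarithm, so that the three successive integrations yield the triple logarithm, with the innermost argument $\|(u_0,b_0)\|_{L^2\cap B^1_{\infty,1}}/\|b_0\|_{B^1_{\infty,1}}$ recording exactly where the small current enters. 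The delicate points are to keep all constants uniform so that the prefactor $1/\|(u_0,b_0)\|_{L^2\cap B^2_{\infty,1}}$ and the three nested logarithms come out as stated, to check that the bound degenerates to global existence as $\|b_0\|_{B^1_{\infty,1}}\to0$ (recovering the two-dimensional Euler limit), and to ensure that the magnetic field enters the obstruction only through its first derivatives, so that the smallness is quantified by $\|b_0\|_{B^1_{\infty,1}}$ rather than by a higher-order norm of $b_0$ — which is the sense in which the hypotheses on $b_0$ are relaxed compared to~\cite{CF3}.
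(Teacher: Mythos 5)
Your structural computation is correct and is a genuinely nice observation: in $2$-D, taking the curl of the Els\"asser system \eqref{eq:els} kills the pressure, and since $Q(\nabla f,\nabla f)=(\div f)(\curl f)=0$ for divergence-free $f$, the stretching terms collapse to $\pm\big(Q(\nabla u,\nabla b)-Q(\nabla b,\nabla u)\big)$, whence
\begin{equation*}
\big\|\Omega^{\pm}(t)\big\|_{L^\infty}\,\leq\,\big\|\Omega^{\pm}_0\big\|_{L^\infty}\,+\,C\int_0^t\|\nabla u\|_{L^\infty}\,\|\nabla b\|_{L^\infty}\,{\rm d}\tau\,.
\end{equation*}
The gap is in the very next step, which is the load-bearing one: these two inequalities do \emph{not} imply that $\|j\|_{L^\infty}$ ``is driven only by $\|\nabla u\|_{L^\infty}\|\nabla b\|_{L^\infty}$''. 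The fields $\Omega^+$ and $\Omega^-$ are transported by \emph{different} velocities ($\bt$ and $\al$ respectively), so separate sup-norm estimates destroy the cancellation in the difference $2j=\Omega^+-\Omega^-$: all they yield is $\|j(t)\|_{L^\infty}\leq\tfrac12\|\Omega^+(t)\|_{L^\infty}+\tfrac12\|\Omega^-(t)\|_{L^\infty}$, which is of order $\|\omega_0\|_{L^\infty}$, not of order $\|b_0\|_{B^1_{\infty,1}}$. Concretely, neglecting the small forcing, $2j(t)\approx\Omega^+_0\circ\psi^+_t-\Omega^-_0\circ\psi^-_t$ with $\psi^\pm$ the (distinct) backward flows of $\bt$ and $\al$; the difference of these compositions is controlled only by the modulus of continuity of $\Omega^\pm_0$ at the flow separation, i.e.\ one needs $\nabla\Omega^\pm_0\in L^\infty$ plus a flow-comparison argument. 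Equivalently, an equation for $j$ along a \emph{single} flow reads $\partial_t j+u\cdot\nabla j=b\cdot\nabla\omega+Q(\nabla b,\nabla u)-Q(\nabla u,\nabla b)$, whose dangerous term $b\cdot\nabla\omega$ involves $\nabla^2u$. Either way, second-order quantities unavoidably enter the propagation of smallness; your scheme never establishes that $\|\nabla b\|_{L^\infty}$ (equivalently $\|j\|_{L^\infty}$) stays small, and without that smallness the forcing $\|\nabla u\|_{L^\infty}\|\nabla b\|_{L^\infty}$ is $O(1)$ and the whole nested-Gr\"onwall hierarchy degenerates to the standard hyperbolic lifespan, with no logarithmic gain at all. (A secondary slip: in your interpolation for $b$, the low-frequency term must be $\|b\|_{L^\infty}$, which is kept small by the induction bound, not $\|b\|_{L^2}$, which is never small.)

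This missing propagation of smallness is exactly what the paper's argument is built around, and it resolves it differently: instead of $(\omega,j)$, it estimates $\delta\al=\al-v$ and $\delta\bt=\bt-v$, where $v$ is the global $2$-D Euler solution with datum $u_0$. Each of these quantities is small initially (equal to $\pm b_0$), is transported by a \emph{single} field ($\bt$, resp.\ $\al$), and is forced by $\P(\delta\bt\cdot\nabla)v$, resp.\ $\P(\delta\al\cdot\nabla)v$, plus commutators --- terms \emph{linear} in the small quantities, with all the second-order information confined to $\|v\|_{B^2_{\infty,1}}$, which is bounded a priori (double exponentially, Lemma \ref{l:order2forVelovity}). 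Smallness then propagates by Gr\"onwall on $E(t)=\|(\delta\al,\delta\bt)\|_{B^1_{\infty,1}}$, and the three nested logarithms come from inverting the resulting triple exponential. If you repair your gap by tracking $j$ or $b$ along a single flow --- legitimate under the hypotheses of this statement, since $b_0\in B^2_{\infty,1}$ --- you are forced to pay second-order norms of the velocity anyway, and you essentially land back on the strategy of \cite{CF3} rather than on a first-order-only argument.
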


The interest for the previous statement comes from the fact that it implies an ``asymptotically global'' well-posedness result, in the following sense: if,
for some $\veps>0$, one has $\left\|b_0\right\|_{B^1_{\infty,1}}\,\sim\,\veps$, then the lifespan $T_\veps>0$ of the corresponding solution
verifies the property $T_\veps\,\longrightarrow\,+\infty$ for $\veps\ra0^+$.
This is consistent with the fact that, in the regime $\veps\ra0^+$, the ideal MHD system \eqref{ieq:mhd} reduces to the incompressible Euler equations,
which are globally well-posed in $2$-D.

A phenomenom of this type has already been observed in \cite{DF} for the non-homogeneous incompressible Euler system, where, in the regime of near constant densities,
the lifespan of the unique Besov-Lipschitz solution can be shown to tend to infinity, with an explicit lower bound for the lifespan. See also \cite{F-L} for a similar result for
a quasi-incompressible Euler system.
However, we have to remark that the lower bound of Theorem \ref{t_i:lifespan} does not hold in a critical setting (namely, at the level of $B^1_{\infty,1}$ regularity), and
requires higher smoothness assumptions for both initial data $u_0$ and $b_0$.

\medbreak
We will comment a little bit more on the contents of Theorems \ref{t_i:WP} and \ref{t_i:lifespan} in the next subsection, when presenting an overview of our main results.

\subsection{Main goals of the paper} \label{ss:goals}

The previous Theorems \ref{t_i:WP} and \ref{t_i:lifespan} deal with the velocity field and the magnetic field in a quite symmetric way
(apart, of course, in the explicit lower bound on the lifespan in the second statement, where $b_0$ plays a special role). However,
looking at the equations hints that the velocity $u$ and the magnetic field $b$ do not play exactly the same role %,
%we remark that the two unknowns $u$ and $b$ have \emph{not} a symmetric role
in system \eqref{ieq:mhd}, despite the symmetric structure of the system: in fact,
the magnetic field equation is \emph{bilinear} in $(u,b)$.

Our main purpose here is to push forward this observation as far as we can, especially in two directions. First of all, we aim at finding a continuation criterion
in terms only of $u$. Secondly, we want to establish a lower bound on the lifespan of the solutions in dimension $d=2$, which requires additional regularity $B^2_{\infty,1}$ only on the initial
velocity field $u_0$.

We will explain better the improvements in both directions here below. Before doing this, we want to clarify that, in our analysis, we will need to resort again to the Els\"asser system
\eqref{eq:els}. Thus, in order to guarantee the equivalence between \eqref{ieq:mhd} and \eqref{eq:els}, we place ourselves in the same setting adopted in \cite{CF3}, namely we will always
work in the framework of \emph{finite energy} solutions of the ideal MHD equations.
Besides, this framework will enable us to use quite freely the Leray projection operator $\P$, and perform $\B$ estimates on the projected system. This differs from the approach
employed in \cite{CF3}, which was based on recasting the equations in the vorticity formulation.

\subsubsection{A continuation criterion based on the velocity} \label{sss:cont-crit}

Theorem \ref{t_i:WP}, and especially the continuation condition \eqref{eq:cont-Lip}, points at a classical phenomenon in the context of quasilinear symmetric hyperbolic systems:
the lifespan $T^*$ of solutions may be characterized by the finiteness of their $L^1_{T^*}(W^{1, \infty})$ norms.
In fact, as for the Beale-Kato-Majda continuation criterion \cite{B-K-M} for the Euler equations, it is possible to show (see \cite{C-K-S})
that a time $T>0$ is prior to explosion, namely $T < T^*$, if and only if
\begin{equation*}
\int_0^T \Big( \| \omega \|_{L^\infty} + \| j \|_{L^\infty} \Big) \dt < +\infty,
\end{equation*}
where $\omega=\curl(u)$ and $j=\curl(b)$ are the vorticity and electrical current matrices of the fluid.
A number of refinements to that criterion exist for ideal MHD, in different spaces. We refer \tsl{e.g.} to \cite{C-C-M} or \cite{C-M-Z} for results in that spirit. 

However, we have to remark that, %as already remarked, looking at the equations hints that the velocity $u$ and the magnetic field $b$ do not play exactly the same role, inasmuch as,
contrary to the momentum equation, the magnetic field equation is \emph{linear} with respect to $b$. So, we may wish for a continuation criterion based on the velocity alone.
In this respect, we will prove that $T < T^*$ as long as
\begin{equation}\label{ieq:contU}
\int_0^T \left\| \nabla^2 u(t) \right\|_{L^\infty} \dt < + \infty.
\end{equation}
The fact that one needs second order derivatives of $u$ in the previous criterion is reasonable. This loss of derivatives has to be ascribed to the hyperbolic nature of the system:
bounding $\nabla b$ in $L^\infty$ requires a control on $\nabla^2u$, since estimates cannot be closed in a $L^\infty$ setting.

Similarly, we notice that the Els\"asser variables $\al = u+b$ and $\bt = u-b$ solve linear equations \eqref{eq:els} too. This makes it possible to establish a continuation criterion based on either
$\al$ or $\bt$. More precisely, we prove that $T < T^*$ as long as
\begin{equation}\label{ieq:conUpB}
\int_0^T \big\| \, \omega \pm j \, \big\|_{B^0_{\infty, 1}} \dt\,<\,+\infty\,.
%\min\left\{\int_0^T \big\| \, \omega + j \, \big\|_{B^0_{\infty, 1}} \dt\,,\,\int_0^T \big\| \, \omega - j \, \big\|_{B^0_{\infty, 1}} \dt\right\} < + \infty.
\end{equation}
In fact, we will prove that $\big\| \, \omega + j \, \big\|_{L^1_T(B^0_{\infty, 1})}<+\infty $ if and only if $\big\| \, \omega - j \, \big\|_{L^1_T(B^0_{\infty, 1})}<+\infty $, so that
the $\pm$ sign in \eqref{ieq:conUpB} is not ambiguous.
Of course, this is not really surprising, as the magnetic field is a pseudovector: the equations remain unchanged when substituting $-b$ to $b$.

To conclude this part, we remark that the continuation cirterion \eqref{ieq:contU}, although it requires a control of the second derivative $\nabla^2 u$, can be formulated with the $L^\infty$ norm,
unlike \eqref{ieq:conUpB} which uses the $B^0_{\infty, 1}$ one. This is a consequence of the fact that we work at critical regularity $B^0_{\infty,1}$,
combined with a fundamental property of the magnetic field equation: it naturally preserves the
divergence-free property $\D(b) = 0$ in time. This means that no addition of a gradient term is needed to keep the magnetic field solenoidal, resulting in a simpler evolution equation.

\subsubsection{Improved lower bound for the lifespan of solutions} \label{sss:lifespan}

As already mentioned, there is no global well-posedness theory for the ideal MHD system, even in the case of two dimensions of space. However, as highlighted
by Theorem \ref{t_i:lifespan} above, we expect better behaviour from the solutions in the regime of small magnetic fields, as, in this case, the system is close to the $2$-D Euler equations.

We remark that, for Theorem \ref{t_i:lifespan} to hold, the initial data must possess at least $B^2_{\infty,1}$ regularity. Indeed,
the method of the proof (which is contained \cite{CF3}) required to find lower order estimates (namely, in $B^1_{\infty,1}$) for the magnetic field,
and use it as a measure of how close the solution $(u, b)$ is to the Euler system: if $b$ is small, then $(u, b)$ almost solves the Euler equations.
This was a problem, as the magnetic field equation involves first order derivatives of the velocity, so that, in a non-Hilbertian functional framework,
such estimates can only be based on \emph{higher order} ones for $u$, whence the regularity assumption on the initial data.

In this paper, we use a different method to relax this regularity requirement: we will only need the initial velocity field $u_0$ to be $B^2_{\infty, 1}$, while
$b_0 \in B^1_{\infty, 1}$ will suffice. To achieve this improvement, we will instead compare directly the ideal MHD system to the Euler equations, which we know have a global solution
$v$ at the level of regularity of the initial datum $u_0$. In doing this, instead of using the magnetic field to  measure the proximity with the Euler system, we introduce Els\"asser-type variables
\begin{equation*}
\delta \al = u + b - v \qquad \quad \text{and} \qquad \quad \delta \bt = u - b - v,
\end{equation*}
which have the nice property of solving a set of transport equations whose forcing terms only involve derivatives with respect to the Euler solution $v$.

\medskip

Before moving on, let us comment a bit further on the physical nature of $2$-D ideal MHD. While $2$-D Euler equations can easily be understood as simply describing a planar fluid,
that is not quite so with MHD, which has an inherent three-dimensional nature: the magnetic field circulates around the electrical current, so they cannot be simultaneously coplanar.

In $2$-D MHD, the fluid evolves in a plane, but the electrical current is always normal to the plane of the fluid, so that the magnetic field will indeed be planar.
In particular, the electrical current may always be represented as a scalar function $j = \partial_1 b_2 - \partial_2 b_1$.
This is of course analogous to the fact that planar fluids have a $3$-D vorticity that is, at all times, normal to the plane of motion.

\subsubsection*{Structure of the paper}
Before concluding this introduction, we give a short overview of the paper.

In the next section, we introduce some tools from Fourier analysis and Littlewood-Paley theory, which we will need in our analysis.
Section \ref{s:cont-crit} is devoted to the statement and proof of some new continuation criteria, as described in Paragraph \ref{sss:cont-crit} above. Finally, the improved lower bound on the
lifespan of the solutions in two space dimensions, requiring higher regularity on the initial velocity only, will be the topic of Section \ref{s:lifespan}.

\subsubsection*{Acknowledgements}

{\small
The authors wish to express their deep gratitude to Rapha\"el Danchin, whose interesting remarks about a preliminary version of their previous work \cite{CF3} motivated the present study.

The work of the second author has been partially supported by the LABEX MILYON (ANR-10-LABX-0070) of Universit\'e de Lyon, within the program ``Investissement d'Avenir''
(ANR-11-IDEX-0007),  and by the projects BORDS (ANR-16-CE40-0027-01), SingFlows (ANR-18-CE40-0027), all operated by the French National Research Agency (ANR).
Both authors have been partially supported by the project CRISIS (ANR-20-CE40-0020-01), operated by the French National Research Agency (ANR).
}

\section{Fourier analysis toolbox} \label{s:tools}

In this section, we give a summary of the harmonic analysis tools we will use throughout this article.
We start by giving the main ideas of Littlewood-Paley analysis and paradifferential calculus, and then we present their use in the theory of transport equations.
We conclude with a short section on the Leray projector, where we present some useful inequalities involving it.

If not otherwise specified, we refer to Chapter 2 of \cite{BCD} for full details on this part.

\subsection{Non-homogeneous Littlewood-Paley theory and Besov spaces} \label{ss:LP}

Here we recall the basic principles of Littlewood-Paley theory. %We refer to Chapter 2 of \cite{BCD} for details.
We focus only on the $\R^d$ case, even though a similar analysis can be performed also in the case of the torus $\T^d$.

First of all, let us introduce a non-homogeneous dyadic partition of unity with
respect to the Fourier variable. 
We fix a smooth radial function $\chi$ supported in the ball $B(0,2)$, equal to $1$ in a neighborhood of $B(0,1)$
and such that $r\mapsto\chi(r\,e)$ is nonincreasing over $\R_+$ for all unitary vectors $e\in\R^d$. Set
$\varphi\left(\xi\right)=\chi\left(\xi\right)-\chi\left(2\xi\right)$ and
$\vphi_j(\xi):=\vphi(2^{-j}\xi)$ for all $j\geq0$.
The dyadic blocks $(\Delta_j)_{j\in\Z}$ are defined by\footnote{Throughout we agree  that  $f(D)$ stands for 
the pseudo-differential operator $u\mapsto\mc{F}^{-1}[f(\xi)\,\what u(\xi)]$.} 
$$
\Delta_j\,:=\,0\quad\mbox{ if }\; j\leq-2,\qquad\Delta_{-1}\,:=\,\chi(D)\qquad\mbox{ and }\qquad
\Delta_j\,:=\,\varphi(2^{-j}D)\quad \mbox{ if }\;  j\geq0\,.
$$
We  also introduce the following low frequency cut-off operator:
\begin{equation} \label{eq:S_j}
S_ju\,:=\,\chi(2^{-j}D)\,=\,\sum_{k\leq j-1}\Delta_{k}\qquad\mbox{ for }\qquad j\geq0\,.
\end{equation}
Note that $S_j$ is a convolution operator. More precisely, if we denote $\mc F(f)\,=\,\what f$ the Fourier transform of a function $f$ and $\mc F^{-1}$
the inverse Fourier transform, after defining
$$
K_0\,:=\,\mc F^{-1}\chi\qquad\qquad\mbox{ and }\qquad\qquad K_j(x)\,:=\,\mathcal{F}^{-1}\left[\chi (2^{-j}\,\cdot\,)\right] (x) = 2^{jd}K_0(2^j x)\,,
$$
we have, for all $j\in\N$ and all tempered distributions $u\in\mc S'$, that $S_ju\,=\,K_j\,*\,u$.
Thus the $L^1$ norm of $K_j$ is independent of $j\geq0$, hence $S_j$ maps continuously $L^p$ into itself, for any $1 \leq p \leq +\infty$.

With this preparation, the following \emph{Littlewood-Paley decomposition} of tempered distributions holds true:
\begin{equation} \label{eq:LP}
\forall\,u\in\mc S'\,,\qquad\qquad u=\sum_{j\geq -1}\Delta_ju\qquad \mbox{ in }\; \mc S'\,.
\end{equation}
%for any $u\in\mc{S}'$, one has the equality~$u=\sum_{j}\Delta_ju$ in the sense of $\mc{S}'$.

Next, let us recall the so-called \emph{Bernstein inequalities}, which explain the way derivatives act on spectrally localised functions.
  \begin{lemma} \label{l:bern}
Let  $0<r<R$.   A constant $C$ exists so that, for any non-negative integer $k$, any couple $(p,q)$ 
in $[1,+\infty]^2$, with  $p\leq q$,  and any function $u\in L^p$,  we  have, for all $\lambda>0$,
$$
\displaylines{
{\Supp}\, \widehat u \subset   B(0,\lambda R)\quad
\Longrightarrow\quad
\|\nabla^k u\|_{L^q}\, \leq\,
 C^{k+1}\,\lambda^{k+d\left(\frac{1}{p}-\frac{1}{q}\right)}\,\|u\|_{L^p}\;;\cr
{\Supp}\, \widehat u \subset \{\xi\in\R^d\,|\, r\lambda\leq|\xi|\leq R\lambda\}
\quad\Longrightarrow\quad C^{-k-1}\,\lambda^k\|u\|_{L^p}\,
\leq\,
\|\nabla^k u\|_{L^p}\,
\leq\,
C^{k+1} \, \lambda^k\|u\|_{L^p}\,.
}$$
\end{lemma}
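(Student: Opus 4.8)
The plan is to represent every spectrally localised operation as a convolution against a rescaled Schwartz kernel and then apply Young's convolution inequality, tracking the scaling explicitly. First I would reduce to the normalised case $\lambda=1$. Setting $u_\lambda(x):=u(x/\lambda)$, one has $\widehat{u_\lambda}(\xi)=\lambda^d\,\widehat u(\lambda\xi)$, so that $u_\lambda$ has Fourier support in $B(0,R)$ (respectively in the annulus $\{r\le|\xi|\le R\}$); moreover $\|u_\lambda\|_{L^p}=\lambda^{d/p}\|u\|_{L^p}$ and $\|\nabla^k u_\lambda\|_{L^q}=\lambda^{d/q-k}\|\nabla^k u\|_{L^q}$. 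Substituting these identities into the desired inequalities shows that the general statement follows from its $\lambda=1$ instance, with precisely the exponent $\lambda^{k+d(1/p-1/q)}$ being generated. From now on I take $\lambda=1$.

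For the first inequality I would fix $\psi\in\mathcal{D}(\R^d)$ with $\psi\equiv1$ on $B(0,R)$, so that $u=\psi(D)u$, and then separate the gain of integrability from the gain coming from the derivatives. The Sobolev-type step is $\|v\|_{L^q}\le\|\mathcal{F}^{-1}\psi\|_{L^m}\,\|v\|_{L^p}$ with $1/m=1+1/q-1/p$, valid for every $v$ with $\Supp\widehat v\subset B(0,R)$ by Young's inequality; this is what produces the factor $\lambda^{d(1/p-1/q)}$ after the scaling is undone. The derivative step is carried out one derivative at a time: writing $G_j:=\mathcal{F}^{-1}[(i\xi_j)\psi]\in L^1$, one has $\partial_j v=G_j*v$ whenever $\Supp\widehat v\subset B(0,R)$, and since each intermediate function keeps its spectrum in the ball, iterating gives $\|\partial^\alpha u\|_{L^p}\le(\max_j\|G_j\|_{L^1})^{k}\|u\|_{L^p}$ for $|\alpha|=k$. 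Combining the two steps and summing over the $O(k^{d-1})$ multi-indices of length $k$ yields $\|\nabla^k u\|_{L^q}\le C^{k+1}\|u\|_{L^p}$, which is the normalised first inequality. The very same computation with $q=p$ gives the upper bound in the second inequality, the annulus being contained in a ball.

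It remains to establish the \emph{reverse} bound $\|u\|_{L^p}\le C^{k+1}\|\nabla^k u\|_{L^p}$ when $\Supp\widehat u$ lies in the annulus. Here the crucial point is that $|\xi|$ stays bounded away from the origin, so that $|\xi|^{-2}$ is smooth on the support of $\widehat u$. I would first treat $k=1$: choosing $\phi\in\mathcal{D}(\R^d\setminus\{0\})$ equal to $1$ on the annulus and using $|\xi|^2=\sum_\ell\xi_\ell^2$, one obtains the representation $u=\sum_\ell \mathcal{F}^{-1}\big[\phi(\xi)\,(-i\xi_\ell)/|\xi|^2\big]*\partial_\ell u$; each multiplier is smooth and compactly supported away from the origin, hence its inverse Fourier transform is in $L^1$, and Young's inequality gives $\|u\|_{L^p}\le C\|\nabla u\|_{L^p}$. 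The general $k$ then follows by iteration: for $|\alpha|=j$ the function $\partial^\alpha u$ again has annulus spectrum, so applying the $k=1$ bound to it yields $\|\partial^\alpha u\|_{L^p}\le C\|\nabla\partial^\alpha u\|_{L^p}$; up to the equivalence of the finitely many tensor norms this reads $\|\nabla^j u\|_{L^p}\le C\|\nabla^{j+1}u\|_{L^p}$, and chaining these estimates from $j=0$ to $j=k-1$ produces $\|u\|_{L^p}\le C^{k}\|\nabla^k u\|_{L^p}$.

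The routine part is the scaling bookkeeping; the delicate point, and the one I would be most careful about, is the control of the constant $C^{k+1}$. This is exactly why I organise both directions as an iteration of a single first-order estimate rather than handling $\nabla^k$ in one stroke: the $k$-dependence then comes solely from raising a fixed, $k$-independent kernel norm (namely $\max_j\|G_j\|_{L^1}$ for the direct bound, and the $L^1$ norm of the $k=1$ inverting multiplier for the reverse one) to the power $k$, the residual combinatorial factors from summing over multi-indices being only polynomial in $k$ and thus harmlessly absorbed into $C^{k+1}$. The reverse inequality also makes transparent why the annulus hypothesis, rather than a mere ball, is indispensable: it is precisely the separation of the spectrum from $\xi=0$ that keeps the inverting multipliers smooth and their kernels integrable.
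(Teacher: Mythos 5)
The paper itself does not prove Lemma \ref{l:bern}: it is recalled from the literature, with a blanket reference to Chapter 2 of \cite{BCD}. Your proof is correct, and it follows the same overall scheme as the classical argument in \cite{BCD} (Lemma 2.1 there) --- reduction to $\lambda=1$ by scaling, spectral cut-off $u=\psi(D)u$ plus Young's inequality for the direct bound, inversion of the symbol on the annulus for the reverse bound --- but it is organized differently at the one point where the statement has real content, namely the growth $C^{k+1}$ of the constant. In \cite{BCD} the order-$k$ estimate is obtained in one stroke: for the direct bound all $k$ derivatives are put on a single kernel, $\partial^\alpha u=(\partial^\alpha\mathcal{F}^{-1}\psi)*u$, and for the reverse bound one uses the identity $|\xi|^{2k}=\sum_{|\alpha|=k}\tfrac{k!}{\alpha!}\,\xi^{2\alpha}$ to write $u=\sum_{|\alpha|=k}g_\alpha*\partial^\alpha u$; both routes then require weighted estimates showing that the relevant kernel norms grow no faster than $C^{k+1}$, which is the technical core of that proof. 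Your iteration of a single first-order estimate (a fixed kernel $G_j$, resp.\ a fixed inverting multiplier $\phi(\xi)(-i\xi_\ell)/|\xi|^2$, whose $L^1$ norm gets raised to the power $k$) trades that computation for a chaining argument and yields the constant $C^{k+1}$ more transparently; this is a legitimate and arguably cleaner variant. Two points should be made explicit in a polished write-up. First, the Young exponent $m$ depends on $(p,q)$, so to get one constant valid for all couples you should note $\|\mathcal{F}^{-1}\psi\|_{L^m}\le\max\bigl\{\|\mathcal{F}^{-1}\psi\|_{L^1},\|\mathcal{F}^{-1}\psi\|_{L^\infty}\bigr\}$ by interpolation. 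Second, in the chaining step the phrase ``up to the equivalence of the finitely many tensor norms'' hides a trap: the tensor spaces change with $j$, and equivalence constants growing with $j$, if paid at every one of the $k$ steps, would produce factorial rather than exponential growth. The cure, implicit in your component-wise formulation, is to chain a $j$-uniform quantity such as $M_j:=\max_{|\alpha|=j}\|\partial^\alpha u\|_{L^p}$, for which your first-order estimate gives $M_j\le C(d)\,M_{j+1}$ with a constant involving only the $d$ fixed kernels; the conversion between $M_k$ and any standard norm of $\nabla^k u$ is then paid only once, at the ends, and costs at most a factor $d^{k/2}$, which is absorbed into $C^{k+1}$.
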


The second Bernstein inequality may be extended to Fourier multipliers whose symbol are homogeneous functions.
This is particularly useful when dealing with the Leray projection operator. %This is Lemma 2.2 in \cite{BCD}.

\begin{lemma}\label{l:homMult}
Let $\sigma : \R^d \backslash \{ 0 \} \tend \mc \C$ be a smooth homogeneous function of degree $m \in \mathbb{Z}$. Then, for all $j \geq 0$ and $p \in [1, + \infty]$, we have
\begin{equation*}
\forall u \in \mc S', \qquad \| \sigma (D) \Delta_j f \|_{L^p} \leq C 2^{jm} \| \Delta_j u \|_{L^p}.
\end{equation*}
\end{lemma}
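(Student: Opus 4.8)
The plan is to read this as a variation of the second Bernstein inequality from Lemma \ref{l:bern}, the only additional ingredient being the homogeneity of $\sigma$. The key idea is to factor the multiplier $\sigma(D)\Delta_j$ as $2^{jm}$ times a convolution against a rescaled, $j$-independent Schwartz kernel, and then conclude by Young's inequality. I may clearly assume $\|\Delta_j u\|_{L^p}<+\infty$, since otherwise the estimate is trivial.

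First, I would fix an auxiliary cut-off function $\wtilde\vphi\in C^\infty_c(\R^d\setminus\{0\})$ which equals $1$ on a neighbourhood of the annulus $\Supp\vphi$. Because the symbol of $\Delta_j$ is supported in $\{\,2^{j}r\leq|\xi|\leq 2^{j}R\,\}$ for every $j\geq0$ (with $\Delta_0=\vphi(D)$ and $\Delta_j=\vphi(2^{-j}D)$ for $j\geq1$), this choice gives the identity $\Delta_j u=\wtilde\vphi(2^{-j}D)\,\Delta_j u$, and hence
\begin{equation*}
\sigma(D)\,\Delta_j u\,=\,\sigma(D)\,\wtilde\vphi(2^{-j}D)\,\Delta_j u\,.
\end{equation*}
The point of inserting $\wtilde\vphi$ is that it localises away from the origin, where $\sigma$ may be singular; this is precisely where the restriction $j\geq0$ is used, as the block $\Delta_{-1}=\chi(D)$ would reach $\xi=0$.

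Next I would exploit the homogeneity. On the support of $\wtilde\vphi(2^{-j}\cdot)$, namely for $|\xi|\sim2^{j}$, the relation $\sigma(2^{j}\eta)=2^{jm}\sigma(\eta)$ yields $\sigma(\xi)=2^{jm}\sigma(2^{-j}\xi)$, so that
\begin{equation*}
\sigma(\xi)\,\wtilde\vphi(2^{-j}\xi)\,=\,2^{jm}\,\theta(2^{-j}\xi)\,,\qquad\text{where}\qquad \theta\,:=\,\sigma\,\wtilde\vphi\,.
\end{equation*}
Since $\wtilde\vphi$ is compactly supported and bounded away from $0$, the product $\theta$ is a genuine element of $C^\infty_c(\R^d)$: the singularity of $\sigma$ at the origin is harmless. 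Therefore $\sigma(D)\wtilde\vphi(2^{-j}D)=2^{jm}\,\theta(2^{-j}D)$, and $\theta(2^{-j}D)$ is the convolution operator with kernel $K_j(x)=2^{jd}\,h(2^{j}x)$, where $h:=\mc F^{-1}\theta\in\mc S$. A change of variables shows $\|K_j\|_{L^1}=\|h\|_{L^1}$, a quantity independent of $j$.

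Finally, I would conclude by Young's convolution inequality: for every $1\leq p\leq+\infty$,
\begin{equation*}
\big\|\sigma(D)\Delta_j u\big\|_{L^p}\,=\,2^{jm}\,\big\|K_j*\Delta_j u\big\|_{L^p}\,\leq\,2^{jm}\,\|h\|_{L^1}\,\big\|\Delta_j u\big\|_{L^p}\,,
\end{equation*}
which is the claimed bound with $C=\|h\|_{L^1}$. There is no serious obstacle here; the only point requiring a little care is the construction of $\wtilde\vphi$ and the verification that $\theta$ extends smoothly and with compact support across the origin, which is exactly what makes the argument break down for the low-frequency block and hence forces the hypothesis $j\geq0$.
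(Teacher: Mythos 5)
Your proof is correct. The paper states Lemma \ref{l:homMult} without proof (deferring to Chapter 2 of \cite{BCD}), and your argument --- inserting an annulus cut-off $\wtilde\vphi$ equal to $1$ on $\Supp\vphi$, using homogeneity to rewrite $\sigma\cdot\wtilde\vphi(2^{-j}\,\cdot\,)$ as $2^{jm}\theta(2^{-j}\,\cdot\,)$ with $\theta=\sigma\wtilde\vphi\in C^\infty_c(\R^d)$, and concluding by Young's inequality with a $j$-independent $L^1$ kernel norm --- is precisely the standard rescaling proof of this result, the same mechanism that underlies the Bernstein inequalities of Lemma \ref{l:bern}.
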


Now, by use of the Littlewood-Paley decomposition \eqref{eq:LP}, we can define the class of non-homogeneous Besov spaces.
\begin{defi} \label{d:B}
  Let $s\in\R$ and $1\leq p,r\leq+\infty$. The \emph{non-homogeneous Besov space}
$B^{s}_{p,r}\,=\,B^s_{p,r}(\R^d)$ is defined as the subset of tempered distributions $u$ for which
$$
\|u\|_{B^{s}_{p,r}}\,:=\,
\left\|\left(2^{js}\,\|\Delta_ju\|_{L^p}\right)_{j\geq -1}\right\|_{\ell^r}\,<\,+\infty\,.
$$
\end{defi}

\medskip

In this article, we mainly work with the $B^m_{\infty, 1}$ spaces ($m = 0, 1, 2$), which are embedded in usual spaces of bounded functions: we have
\begin{equation*}
B^m_{\infty, 1} \hookrightarrow W^{m, \infty}.
\end{equation*}
In particular, the space $B^1_{\infty, 1}$ is contained in the space of globally Lipschitz functions $W^{1,\infty}$.

\subsection{Non-homogeneous paradifferential calculus}\label{ss:NHPC}

In this subsection, we recall some useful results from paradifferential calculus. We mainly focus on the Bony paraproduct decomposition (after J.-M. Bony, see \cite{Bony}) and on some
basic commutator estimates.

We start by introducing the paraproduct operator. %Once again, we refer to Chapter 2 of \cite{BCD} for full details.
%Constructing the paraproduct operator relies on the observation that, 
Formally, the product  of two tempered distributions $u$ and $v$ may be decomposed into 
\begin{equation*} %\label{eq:bony}
u\,v\;=\;\mathcal{T}_u(v)\,+\,\mathcal{T}_v(u)\,+\,\mathcal{R}(u,v)\,,
\end{equation*}
where we have defined
$$
\mathcal{T}_u(v)\,:=\,\sum_jS_{j-1}u\,\Delta_j v\qquad\qquad\mbox{ and }\qquad\qquad
\mathcal{R}(u,v)\,:=\,\sum_j\sum_{|k-j|\leq1}\Delta_j u\,\Delta_{k}v\,.
$$
The above operator $\mc T$ is called ``paraproduct'' whereas
$\mc R$ is called ``remainder''.
The paraproduct and remainder operators have many nice continuity properties. 
The following ones will be of constant use in this paper.
\begin{prop}\label{p:op}
For any $(s,p,r)\in\R\times[1,+\infty]^2$ and $t>0$, the paraproduct operator 
$\mathcal{T}$ maps continuously $L^\infty\times B^s_{p,r}$ in $B^s_{p,r}$ and  $B^{-t}_{\infty,\infty}\times B^s_{p,r}$ in $B^{s-t}_{p,r}$.
Moreover, the following estimates hold:
$$
\|\mathcal{T}_u(v)\|_{B^s_{p,r}}\,\leq\, C\,\|u\|_{L^\infty}\,\|\nabla v\|_{B^{s-1}_{p,r}}\qquad\mbox{ and }\qquad
\|\mathcal{T}_u(v)\|_{B^{s-t}_{p,r}}\,\leq\, C\|u\|_{B^{-t}_{\infty,\infty}}\,\|\nabla v\|_{B^{s-1}_{p,r}}\,.
$$
For any $(s_1,p_1,r_1)$ and $(s_2,p_2,r_2)$ in $\R\times[1,+\infty]^2$ such that 
$s_1+s_2>0$, $1/p:=1/p_1+1/p_2\leq1$ and~$1/r:=1/r_1+1/r_2\leq1$,
the remainder operator $\mathcal{R}$ maps continuously~$B^{s_1}_{p_1,r_1}\times B^{s_2}_{p_2,r_2}$ into~$B^{s_1+s_2}_{p,r}$.
In the case $s_1+s_2=0$, provided $r=1$, the operator $\mathcal{R}$ is continuous from $B^{s_1}_{p_1,r_1}\times B^{s_2}_{p_2,r_2}$ with values
in $B^{0}_{p,\infty}$.
\end{prop}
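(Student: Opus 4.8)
The plan is to treat both operators by a single dyadic template: expand each into its building blocks, locate the Fourier support of every block, estimate that block in $L^p$ by H\"older's inequality together with Bernstein's inequality (Lemma~\ref{l:bern}), and finally reconstitute the target Besov norm through Definition~\ref{d:B}. The only genuine difference between $\mc T$ and $\mc R$ lies in the geometry of these Fourier supports. For the paraproduct, the decisive fact is that each summand $S_{j-1}u\,\Delta_j v$ is spectrally supported in a dyadic annulus $2^j\mc C$ (the large annulus of $\Delta_j v$ dominating the ball of $S_{j-1}u$), so that the standard characterization $\big\|\sum_j f_j\big\|_{B^s_{p,r}}\lesssim\big\|(2^{js}\|f_j\|_{L^p})_j\big\|_{\ell^r}$, valid whenever each $f_j$ has spectrum in $2^j\mc C$, applies directly.

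For the first paraproduct bound I would use that $S_{j-1}$ is bounded on $L^\infty$ uniformly in $j$ (as noted after \eqref{eq:S_j}), whence $\|S_{j-1}u\,\Delta_j v\|_{L^p}\leq C\|u\|_{L^\infty}\|\Delta_j v\|_{L^p}$; multiplying by $2^{js}$ and taking the $\ell^r$ norm yields continuity on $L^\infty\times B^s_{p,r}$, and the sharper estimate with $\|\nabla v\|_{B^{s-1}_{p,r}}$ follows by applying the lower Bernstein inequality, $\|\Delta_j v\|_{L^p}\sim 2^{-j}\|\nabla\Delta_j v\|_{L^p}$, on the high-frequency blocks $j\geq1$ that actually occur in the sum. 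For the second bound I would instead estimate $\|S_{j-1}u\|_{L^\infty}\leq\sum_{k\leq j-2}\|\Delta_k u\|_{L^\infty}\leq\|u\|_{B^{-t}_{\infty,\infty}}\sum_{k\leq j-2}2^{kt}\leq C\,2^{jt}\|u\|_{B^{-t}_{\infty,\infty}}$, the geometric sum converging precisely because $t>0$; the factor $2^{jt}$ then exactly compensates the shift from $B^s_{p,r}$ to $B^{s-t}_{p,r}$.

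The remainder is where the real difficulty lies, and it is of a spectral nature: a product $\Delta_j u\,\Delta_k v$ of comparable frequencies $|k-j|\leq1$ need not be localized in an annulus, its spectrum sitting only in a ball $B(0,C2^j)$. To cope with this I would reindex by the larger frequency and observe that $\Delta_q(\Delta_\nu u\,\Delta_k v)=0$ unless $\nu\geq q-N_0$ for a fixed integer $N_0$; applying $\Delta_q$, then H\"older in $x$ with $1/p=1/p_1+1/p_2$, and using $2^{-ks_2}\sim 2^{-\nu s_2}$ for $|k-\nu|\leq1$, one arrives at a convolution inequality
\begin{equation*}
2^{q(s_1+s_2)}\big\|\Delta_q\mc R(u,v)\big\|_{L^p}\,\leq\,C\sum_{\nu\geq q-N_0}2^{(q-\nu)(s_1+s_2)}\,e_\nu\,,
\end{equation*}
where $e_\nu:=\big(2^{\nu s_1}\|\Delta_\nu u\|_{L^{p_1}}\big)\big(\sum_{|k-\nu|\leq1}2^{ks_2}\|\Delta_k v\|_{L^{p_2}}\big)$ belongs to $\ell^r$ with norm $\lesssim\|u\|_{B^{s_1}_{p_1,r_1}}\|v\|_{B^{s_2}_{p_2,r_2}}$ by H\"older in the frequency index ($1/r=1/r_1+1/r_2$). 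When $s_1+s_2>0$, the convolution kernel $m\mapsto 2^{m(s_1+s_2)}\mathbf 1_{\{m\leq N_0\}}$ lies in $\ell^1$, so Young's inequality for series closes the estimate in $B^{s_1+s_2}_{p,r}$.

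Finally, the borderline case $s_1+s_2=0$ is exactly the point where this kernel ceases to be summable (it is constant for $m\leq N_0$), so Young's inequality fails for an $\ell^r$ target. I would instead bound the right-hand side above by $\sum_{\nu\geq q-N_0}e_\nu\leq\|e\|_{\ell^1}$, uniformly in $q$; the hypothesis $r=1$, that is $1/r_1+1/r_2=1$, guarantees precisely that $(e_\nu)\in\ell^1$, and taking the supremum over $q$ yields membership in $B^0_{p,\infty}$. The main obstacle throughout is thus the loss of annular spectral localization for $\mc R$, together with the delicate summation of the convolution kernel at the endpoint $s_1+s_2=0$, which forces both the strengthened index condition $r=1$ and the weaker target integrability $B^0_{p,\infty}$.
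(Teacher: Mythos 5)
This proposition is not proved in the paper at all: it is recalled as a standard result, with the blanket reference to Chapter 2 of \cite{BCD}, where it corresponds to the classical continuity theorems for Bony's paraproduct and remainder. Your argument is correct and is essentially the proof given in that reference --- spectral localization of each block, H\"older plus Bernstein estimates, the annulus-support characterization of Besov norms for $\mathcal{T}$ (valid for every $s\in\R$ precisely because the blocks $S_{j-1}u\,\Delta_j v$ live in dyadic annuli), and the convolution/Young's inequality argument for $\mathcal{R}$, whose kernel $2^{m(s_1+s_2)}\mathbf 1_{\{m\le N_0\}}$ ceases to be summable exactly at $s_1+s_2=0$, forcing the hypothesis $r=1$ and the weaker target $B^0_{p,\infty}$.
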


The consequence of this proposition is that the spaces $\B$ are Banach algebras as long as $s > 0$.
Instead, notice that the space $B^0_{\infty, r}$ is \emph{not} an algebra.
%Moreover, in that case, we have the so-called \emph{tame estimates}, which we recall here only in the case when $p=+\infty$.
%
%\begin{cor}\label{c:tame}
%Let $(s, r)\in\R\times[1,+\infty]$ be such that that $s > 0$. Then, we have
%\begin{equation*}
%\forall\, f, g \in \B\,, \quad\qquad \| fg \|_{\B}\, \lesssim \,\| f \|_{L^\infty}\, \|g\|_{\B}\, +\, \| f \|_{\B} \,\| g \|_{L^\infty}\,.
%\end{equation*}
%\end{cor}

%\begin{rmk} \label{r:tame}
%Notice that the space $B^0_{\infty, 1}$ is \emph{not} an algebra. If $f, g \in B^0_{\infty, 1}$, one can use Proposition \ref{p:op} to bound the paraproducts $\mc T_f(g)$ and $\mc T_g(f)$, but not the
%remainder $\mathcal{R}(f, g)$. %, because the sum of the regularities of $f$ and $g$ is zero.
%\end{rmk}

Now, we switch to considering some commutator estimates. The first one is contained in the next statement (see Lemma 2.100 and Remark 2.101 in \cite{BCD}).

\begin{lemma}\label{l:CommBCD}
Assume that $v \in \B$ with $(s, r)\in\R\times[1,+\infty]$ satisfying $s> 1$, or $s=r=1$.
Denote by $\big[ v \cdot \nabla, \Delta_j \big] f\,=\,(v \cdot \nabla) \Delta_j - \Delta_j (v \cdot \nabla)$ the commutator between the transport operator $v\cdot\nabla$ and the frequency
localisation operator $\Delta_j$. 
Then we have
\begin{equation*}
\forall\, f \in \B\,, \qquad\qquad  2^{js}\left\| \big[ v \cdot \nabla, \Delta_j \big] f  \right\|_{L^\infty} \lesssim c_j \Big( \|\nabla v \|_{L^\infty} \| f \|_{\B} +
\|\nabla v \|_{B^{s-1}_{\infty, r}} \|\nabla f \|_{L^\infty} \Big)\,,
\end{equation*}
where $\big(c_j\big)_{j\geq -1}$ is a sequence belonging to the unit ball of $\ell^r$. 

%If the function $f$ is not Lipschitz, we have a lower regularity estimate. For all $\sigma \in ]-1, 1 + \frac{d}{p}[$, 
%\begin{equation*}\label{eq:lCommBCDLowerReg}
%2^{\sigma s} \left\| \big[ v \cdot \nabla, \Delta_j \big] f  \right\|_{L^p} \lesssim c_j \| \nabla v \|_{B^\sigma_{p, r} \cap L^\infty} \| f \|_{B^\sigma_{p, r}},
%\end{equation*}
%where, again, the sequence $(c_j)_j$ is in the unit ball of $l^r$.
\end{lemma}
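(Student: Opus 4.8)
The plan is to prove the commutator estimate
\begin{equation*}
2^{js}\left\| \big[ v \cdot \nabla, \Delta_j \big] f \right\|_{L^\infty} \lesssim c_j \Big( \|\nabla v\|_{L^\infty} \|f\|_{\B} + \|\nabla v\|_{B^{s-1}_{\infty,r}} \|\nabla f\|_{L^\infty} \Big)
\end{equation*}
by decomposing the product $(v\cdot\nabla)f$ through the Bony paraproduct formula of Proposition~\ref{p:op} and commuting each piece with $\Delta_j$. Writing $v\cdot\nabla f = \sum_k \partial_k(v_k f)$ (using $\div v = 0$ to move the derivative inside, which is the natural way to exploit the divergence-free structure), I would expand each $v_k f$ as $\mathcal{T}_{v_k}f + \mathcal{T}_f v_k + \mathcal{R}(v_k, f)$ and track how $\Delta_j$ interacts with the three terms. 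The goal is to reduce everything to sums of localized pieces on which the first Bernstein inequality from Lemma~\ref{l:bern} and the $L^\infty$ bounds can be applied.

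The key technical device is the standard integral representation of the commutator on the paraproduct term. For the leading paraproduct piece $\mathcal{T}_{v}\cdot\nabla f = \sum_k S_{k-1}v\,\Delta_k\nabla f$, the relevant commutator $\big[S_{k-1}v\cdot\nabla,\Delta_j\big]$ is nonzero only for $|k-j|\leq$ a fixed constant (by frequency support considerations), and on each such term I would write $\Delta_j$ as convolution against $2^{jd}h(2^j\cdot)$ and use the first-order Taylor expansion
\begin{equation*}
\big[ S_{j-1}v\cdot\nabla, \Delta_j\big]f(x) = -\int_{\R^d} \big(S_{j-1}v(x) - S_{j-1}v(x-y)\big)\cdot \nabla\big(2^{jd}h(2^j y)\big)\,f(x-y)\,{\rm d} y\,,
\end{equation*}
after an integration by parts. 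The mean value inequality bounds the difference $S_{j-1}v(x)-S_{j-1}v(x-y)$ by $|y|\,\|\nabla S_{j-1}v\|_{L^\infty} \leq |y|\,\|\nabla v\|_{L^\infty}$, and the extra factor $|y|$ compensates exactly one power of $2^j$ coming from the derivative hitting the kernel, yielding $\lesssim \|\nabla v\|_{L^\infty}\,\|\Delta_k f\|_{L^\infty}$. Multiplying by $2^{js}$ and summing over the finitely many relevant $k$ produces the $\|\nabla v\|_{L^\infty}\|f\|_{\B}$ contribution with a coefficient $c_j$ in the unit ball of $\ell^r$.

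For the remaining two pieces I expect routine estimates without genuine commutator cancellation. The symmetric paraproduct $\mathcal{T}_{\nabla f}v$ contributes terms of the form $S_{k-1}\nabla f\,\Delta_k v$; localizing with $\Delta_j$ forces $k\sim j$, and Bernstein together with the bound $\|S_{k-1}\nabla f\|_{L^\infty}\lesssim\|\nabla f\|_{L^\infty}$ gives $2^{js}\|\cdot\|_{L^\infty}\lesssim c_j\,\|\nabla f\|_{L^\infty}\,\|v\|_{B^s_{\infty,r}}$; here I would absorb $\|v\|_{\B}\sim\|\nabla v\|_{B^{s-1}_{\infty,r}}$ using the second Bernstein inequality to recover the stated norm. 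The remainder $\mathcal{R}(v,\nabla f)$ is handled similarly, distributing derivatives so that one factor carries $\|\nabla f\|_{L^\infty}$ and the other $\|\nabla v\|_{B^{s-1}_{\infty,r}}$, and invoking the hypothesis $s>1$ (or $s=r=1$) to guarantee the summability of the dyadic series with an $\ell^r$ coefficient. The main obstacle is purely bookkeeping: keeping the two distinct norm structures separated across all three Bony pieces so that the final bound splits cleanly into the two stated terms, and verifying that the borderline case $s=r=1$ survives the summation (this is exactly where the $B^0_{\infty,1}$ endpoint continuity of the remainder in Proposition~\ref{p:op} is needed). No single step is conceptually deep; the care lies in ensuring every sum yields a genuine $\ell^r$ sequence $(c_j)$ rather than a merely bounded one.
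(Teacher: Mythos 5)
The paper itself gives no proof of this lemma: it is quoted verbatim from Bahouri--Chemin--Danchin (Lemma 2.100 and Remark 2.101 in \cite{BCD}), and the proof there follows exactly the strategy you outline, namely a Bony decomposition of the commutator plus a kernel (mean-value) estimate on the paraproduct piece. Your treatment of the leading term $\sum_{|k-j|\leq C}\big[S_{k-1}v\cdot\nabla,\Delta_j\big]\Delta_k f$ is correct: the restriction to $k\sim j$ by spectral support, the representation of $\Delta_j$ as convolution against $2^{jd}h(2^j\cdot)$, and the gain of $2^{-j}$ from the factor $|y|$ against the derivative of the kernel are precisely the right mechanism, and they produce the contribution $\|\nabla v\|_{L^\infty}\|f\|_{\B}$. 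The genuine gap lies in the other two Bony pieces, and it sits at the only delicate feature of the statement: the right-hand side contains no norm of $v$ itself, only norms of $\nabla v$.

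Your claimed equivalence $\|v\|_{\B}\sim\|\nabla v\|_{B^{s-1}_{\infty,r}}$ is false: the second Bernstein inequality of Lemma \ref{l:bern} applies only to the annulus-supported blocks $\Delta_k v$, $k\geq0$, and the low-frequency block $\|\Delta_{-1}v\|_{L^\infty}$ is not controlled by any norm of $\nabla v$. Take for instance $v(x)=M\sin(x_1/M)\,e_2$ with $M\geq1$: it is divergence-free, $v=\Delta_{-1}v$, and $\|\nabla v\|_{L^\infty}+\|\nabla v\|_{B^{s-1}_{\infty,r}}\lesssim1$ while $\|v\|_{\B}\sim M$. For the paraproduct $\mc T_{\nabla f}v$ this does not hurt, because only annulus-supported blocks of $v$ occur in it, so your ``absorption'' step can be repaired there. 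But in the remainder $\mc R(v,\nabla f)$ the blocks $\Delta_{-1}v$ and $\Delta_0 v$ genuinely appear, paired with $\Delta_m\nabla f$ for $m\leq1$, and for these pieces it is impossible to ``distribute derivatives so that one factor carries $\|\nabla f\|_{L^\infty}$ and the other $\|\nabla v\|_{B^{s-1}_{\infty,r}}$'': with the example above and a low-frequency $f$ (say $f=\sin(x_2)$), each of the two halves $\mc R\big(v_k,\Delta_jf\big)$ and $\Delta_j\mc R(v_k,f)$ of the commutator is of size $M$ for small $j$, whereas the right-hand side of the lemma is of size $1$. These terms obey the stated bound only through the commutator cancellation: you must keep the two occurrences paired, write them as $\big[\widetilde\Delta_m v^i,\Delta_j\big]\Delta_m\partial_i f$ with $\widetilde\Delta_m:=\Delta_{m-1}+\Delta_m+\Delta_{m+1}$, apply the same mean-value argument as for the leading term (gaining $2^{-j}\|\nabla v\|_{L^\infty}$), and observe that these pieces vanish for $j$ larger than a universal constant, so the prefactor $2^{j(s-1)}$ is harmless; equivalently, split $v=\Delta_{-1}v+\widetilde v$ at the outset, estimate $\big[\Delta_{-1}v\cdot\nabla,\Delta_j\big]f$ entirely by the kernel argument, and run the Bony machinery on $\widetilde v$ only. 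As written, your plan proves the strictly weaker inequality with $\|v\|_{\B}$ in place of $\|\nabla v\|_{B^{s-1}_{\infty,r}}$ --- which, incidentally, would suffice for every application of the lemma in this paper, but is not the lemma as stated. Two lesser points: you assume $\div(v)=0$ (both in the rewriting $v\cdot\nabla f=\sum_k\partial_k(v_kf)$ and in your integration by parts, which otherwise produces an extra $\div(S_{j-1}v)$ term), whereas the lemma assumes no such thing; keeping the derivative on $f$, i.e.\ estimating $\big[S_{k-1}v^i,\Delta_j\big]\partial_i\Delta_kf$ directly, avoids both uses. Finally, at the endpoint $s=r=1$ the $B^0$ endpoint continuity of $\mc R$ in Proposition \ref{p:op} cannot help you: it lands in $B^0_{\infty,\infty}$ and hence only yields an $\ell^\infty$ coefficient, not the required $\ell^1$ sequence $(c_j)$; what actually closes the sums is the discrete convolution (Young) inequality, which works in all the cases $s>0$ considered here.
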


The second commutator result deals with commutators between paraproduct operators and Fourier multipliers.
This essentially corresponds to Lemma 2.99 of \cite{BCD}.
From its proof in \cite{BCD}, it appears that the result holds regardless of the regularity of the symbol at $\xi=0$.

\begin{lemma}\label{l:ParaComm}
Let $\k$ be a smooth function on $\mathbb{R}^d\setminus\{0\}$, which is homogeneous of degree $m$ away from a neighborhood of $0$. Then, for a vector field $v$ such that $\nabla v \in L^\infty$, one has:
\begin{equation*}
\forall\, f \in \B\,, \qquad \left\| \big[ \mathcal{T}_v, \k(D) \big] f \right\|_{B^{s-m+1}_{\infty, r}}\, \lesssim\, \|\nabla v\|_{L^\infty} \|f\|_{\B}\,.
\end{equation*}
\end{lemma}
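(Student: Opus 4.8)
The plan is to expand the commutator block by block and reduce everything to a single-frequency kernel estimate. Writing the paraproduct as $\mathcal{T}_v f=\sum_{j\geq1}S_{j-1}v\,\Delta_j f$ and using that the two Fourier multipliers $\kappa(D)$ and $\Delta_j$ commute, I would first rewrite
$$
\big[\mathcal{T}_v,\kappa(D)\big]f=\sum_{j\geq1}\big[S_{j-1}v,\kappa(D)\big]\Delta_j f=:\sum_{j\geq1}u_j\,.
$$
The crucial structural observation is spectral localisation: since $\Supp\widehat{S_{j-1}v}$ sits in a ball of radius $\lesssim 2^j$ (small enough that its sum with an annulus of size $2^j$ stays away from the origin), while $\Supp\widehat{\Delta_j f}$ lies in the annulus $2^j\mathcal{C}$, both pieces $S_{j-1}v\,\kappa(D)\Delta_j f$ and $\kappa(D)\big(S_{j-1}v\,\Delta_j f\big)$ are spectrally supported in a single fixed dyadic annulus $2^j\widetilde{\mathcal{C}}$. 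Consequently, by the standard characterisation of Besov norms through annulus-supported summands, it suffices to bound $\|u_j\|_{L^\infty}$ and then take the weighted $\ell^r$ norm with weight $2^{j(s-m+1)}$.

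For the single-block estimate I would use a kernel representation. Because $\Delta_j f$ and the product $S_{j-1}v\,\Delta_j f$ are localised in annuli of size $2^j$, I may replace $\kappa(D)$ by $\kappa(D)\psi(2^{-j}D)$, where $\psi$ is a fixed smooth function supported in an annulus away from the origin and equal to $1$ on all the relevant frequencies; denote by $g_j$ the convolution kernel of $\kappa(D)\psi(2^{-j}D)$. Homogeneity of $\kappa$ of degree $m$ then yields the exact scaling $g_j(x)=2^{j(m+d)}g(2^jx)$, with $g=\mathcal{F}^{-1}\big(\kappa\,\psi\big)$ a fixed Schwartz function (here $\kappa\,\psi$ is smooth and compactly supported away from $0$). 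The commutator becomes
$$
u_j(x)=\int g_j(x-y)\,\big[S_{j-1}v(x)-S_{j-1}v(y)\big]\,\Delta_j f(y)\,\mathrm{d}y\,,
$$
and a first-order Taylor expansion of $S_{j-1}v$ extracts a factor $(x-y)$ tested against $\nabla S_{j-1}v$. Since $\nabla S_{j-1}v=S_{j-1}\nabla v$ is bounded in $L^\infty$ by $\|\nabla v\|_{L^\infty}$, and since the scaling gives $\int|z|\,|g_j(z)|\,\mathrm{d}z\lesssim 2^{j(m-1)}$, I obtain the gain of one derivative: $\|u_j\|_{L^\infty}\lesssim 2^{j(m-1)}\,\|\nabla v\|_{L^\infty}\,\|\Delta_j f\|_{L^\infty}$.

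The summation is then immediate. The weight $2^{j(s-m+1)}$ times the gain $2^{j(m-1)}$ equals $2^{js}$, so that $\big(2^{j(s-m+1)}\|u_j\|_{L^\infty}\big)_j\lesssim \|\nabla v\|_{L^\infty}\,\big(2^{js}\|\Delta_j f\|_{L^\infty}\big)_j$, and taking the $\ell^r$ norm together with the annulus characterisation of $B^{s-m+1}_{\infty,r}$ gives exactly $\|\nabla v\|_{L^\infty}\,\|f\|_{\B}$, as claimed.

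I expect the main delicate point to be the kernel analysis, and in particular the role played by $\kappa$ near the origin. The derivative gain hinges on the exact homogeneous scaling $g_j(x)=2^{j(m+d)}g(2^jx)$, which is only literally valid once the annulus $2^j\mathcal{C}$ lies inside the region where $\kappa$ is genuinely homogeneous; for the finitely many low-frequency indices $j$ whose annulus meets the neighbourhood of $0$, one instead invokes smoothness of $\kappa$ on $\R^d\setminus\{0\}$ to obtain uniform Schwartz bounds on $g_j$, the constants being harmless since only finitely many such $j$ occur. This is precisely why the behaviour of $\kappa$ at $\xi=0$ is irrelevant: the paraproduct only ever feeds $\kappa(D)$ functions whose spectrum is bounded away from the origin (the sum runs over $j\geq1$, so the smallest block is $S_0v\,\Delta_1 f$), and hence no assumption on $\kappa$ near $0$ is needed.
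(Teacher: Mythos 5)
Your proof is correct and follows essentially the same route as the paper, which offers no independent argument but simply cites Lemma 2.99 of \cite{BCD}: your block-by-block decomposition $\sum_j [S_{j-1}v,\kappa(D)]\Delta_j f$, the kernel representation with the rescaled Schwartz kernel $g_j(x)=2^{j(m+d)}g(2^jx)$, the first-order Taylor expansion yielding the factor $2^{j(m-1)}\|\nabla v\|_{L^\infty}\|\Delta_jf\|_{L^\infty}$, and the resummation via the annulus characterisation of Besov norms is precisely the proof given there. Your closing observation --- that the paraproduct only feeds $\kappa(D)$ frequencies supported in annuli away from the origin, so that the finitely many low-frequency blocks can be handled by smoothness of $\kappa$ on $\R^d\setminus\{0\}$ alone --- is exactly the point the paper makes when it notes that the result holds regardless of the regularity of the symbol at $\xi=0$.
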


%\begin{rmk}
%From the proof of this lemma in \cite{BCD}, it appears that the result holds if $\k$ is a homogeneous function of degree zero, because it is bounded near $0$. In particular,
%if $\k(D) = \mathbb{P}$ is the Leray projector, then we have
%\begin{equation*}
%\forall f \in \B, \qquad \left\| \big[ \mathcal{T}_v, \mathbb{P} \big] f \right\|_{B^{s+1}_{p, r}} \lesssim \|\nabla v\|_{L^\infty} \|f\|_{\B}.
%\end{equation*}
%\end{rmk}

Additional commutator estimates, involving the Leray projection operators, are postponed to Subsection \ref{ss:Leray}.

\subsection{Transport equations in Besov spaces} \label{ss:transport}

In this section, we focus on transport equations in non-homogeneous Besov spaces. We refer to Chapter 3 of \cite{BCD} for a complete presentation of the subject.
We study the initial value problem
\begin{equation}\label{eq:TV}
\begin{cases}
\partial_t f + v \cdot \nabla f = g \\
f_{|t = 0} = f_0\,.
\end{cases}
\end{equation}
We will always assume the velocity field $v=v(t,x)$ to be a Lipschitz divergence-free function, \tsl{i.e.} $\D(v) = 0$.
It is therefore practical to formulate the following definition: the triplet $(s,p, r) \in \mathbb{R} \times [1, +\infty]^2$ is said to satisfy the Lipschitz condition if the inequality
\begin{equation}\label{i_eq:Lip}
s > 1 + \frac{d}{p} \qquad \text{or} \qquad s = \frac{d}{p} \text{ and } r = 1.
\end{equation}
holds. As we have explained above, this implies the embedding $B^s_{p,r} \hookrightarrow W^{1, \infty}$.

The main well-posedness result concerning problem \eqref{eq:TV} in Besov spaces is contained in the following statement, stated in the case
$p=+\infty$ (the only relevant one for our analysis).
We recall here that, when $X$ is Banach, the notation $C^0_w\big([0,T];X\big)$ refers to the space of functions which are continuous in time with values in $X$ endowed with its weak topology.
\begin{thm}\label{th:transport}
Let $(s, r) \in \mathbb{R} \times [1, +\infty]$ satisfy the Lipschitz condition \eqref{i_eq:Lip} with $p=+\infty$.
Given some $T>0$, let $g \in L^1_T(\B)$. Assume that $v \in L^1_T(\B)$ and that there exist real numbers
$q > 1$ and $M > 0$ for which $v \in L^q_T(B^{-M}_{\infty, \infty})$. Finally, let $f_0 \in \B$ be an initial datum.

Then, the transport equation \eqref{eq:TV} has a unique solution $f$ in:
\begin{itemize}
\item the space $C^0\big([0,T];\B\big)$, if $r < +\infty$;
\item the space $\left( \bigcap_{s'<s} C^0\big([0,T];B^{s'}_{\infty, \infty}\big) \right) \cap C^0_{w}\big([0,T];B^s_{p, \infty}\big)$, if $r = +\infty$.
\end{itemize}
Moreover, this unique solution satisfies the following estimate:
\begin{equation*} % \label{eq:TVEstimate}
\| f \|_{L^\infty_T(\B)} \leq \exp \left( C\!\! \int_0^T \| \nabla v \|_{B^{s-1}_{\infty, r}} \right)
\left\{ \| f_0 \|_{\B} + \int_0^T \exp \left( - C\!\! \int_0^t \| \nabla v \|_{B^{s-1}_{\infty, r}} \right) \| g(t) \|_{\B} {\rm d} t  \right\},
\end{equation*}
for some constant $C = C(d, s, r)>0$.
\end{thm}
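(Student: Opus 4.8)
The plan is to follow the classical Littlewood--Paley approach to transport equations, whose quantitative core is the a priori estimate; existence, uniqueness and the time-continuity are then obtained by approximation together with a careful frequency splitting. Since the commutator estimate is already available in Lemma~\ref{l:CommBCD}, the a priori bound is essentially a bookkeeping exercise, and I expect the delicate part to lie in the continuity statements.

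\emph{A priori estimate.} I would first localize \eqref{eq:TV} in frequency by applying the block $\Delta_j$, which gives
\begin{equation*}
\partial_t \Delta_j f + v\cdot\nabla \Delta_j f = \Delta_j g + \big[v\cdot\nabla,\Delta_j\big]f.
\end{equation*}
As $\B \hookrightarrow W^{1,\infty}$ under the Lipschitz condition, the field $v$ is Lipschitz and divergence-free, so its flow is well-defined and measure preserving; consequently the operator $\partial_t + v\cdot\nabla$ propagates the $L^\infty$ norm with no growth, and integrating along characteristics bounds $\|\Delta_j f(t)\|_{L^\infty}$ by $\|\Delta_j f_0\|_{L^\infty}$ plus the time integral of $\|\Delta_j g\|_{L^\infty} + \|[v\cdot\nabla,\Delta_j]f\|_{L^\infty}$. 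Multiplying by $2^{js}$, inserting the commutator bound of Lemma~\ref{l:CommBCD} (and absorbing $\|\nabla v\|_{L^\infty} \lesssim \|\nabla v\|_{B^{s-1}_{\infty,r}}$ and $\|\nabla f\|_{L^\infty} \lesssim \|f\|_{\B}$), and taking the $\ell^r$ norm in $j$ while using Minkowski's inequality to move it inside the time integral, I arrive at
\begin{equation*}
\|f(t)\|_{\B} \leq \|f_0\|_{\B} + \int_0^t \|g(\tau)\|_{\B}\,{\rm d}\tau + C\int_0^t \|\nabla v(\tau)\|_{B^{s-1}_{\infty,r}}\,\|f(\tau)\|_{\B}\,{\rm d}\tau.
\end{equation*}
An integrating-factor Grönwall argument applied to this inequality then yields exactly the stated exponential estimate.

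\emph{Existence and uniqueness.} For existence I would regularize the data, e.g. $v_n := S_n v$, $f_{0,n} := S_n f_0$, $g_n := S_n g$; for each $n$ the smooth field $v_n$ admits a genuine flow, so the linear equation is solved explicitly by characteristics, producing a smooth $f_n$. The a priori estimate bounds $(f_n)_n$ in $L^\infty_T(\B)$ uniformly, and reading the equation backwards bounds $(\partial_t f_n)_n$ in $L^1_T$ of a space of lower regularity; a standard local-in-space compactness argument then extracts a limit $f$ solving \eqref{eq:TV}, which inherits the bound by lower semicontinuity of the norm. The auxiliary hypothesis $v\in L^q_T(B^{-M}_{\infty,\infty})$, $q>1$, serves precisely to control the large-scale part of $v$, which $\nabla v$ alone cannot see on $\R^d$, thereby making the flow and the product $v\cdot\nabla f$ globally meaningful and the limit legitimate. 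Uniqueness follows by applying the a priori estimate to the difference $w=f_1-f_2$ of two solutions, which solves the homogeneous problem with zero datum (working one derivative below when $s$ is near the Lipschitz threshold, so as to have strong continuity), forcing $w\equiv0$.

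\emph{Main obstacle.} The genuinely delicate step is the time-continuity. For $r<+\infty$ I would split $f(t)-f(t_0)$ into low frequencies $j\leq N$, continuous in time because each $\Delta_j f \in C^0_T(L^\infty)$ via the equation, and a high-frequency tail $j>N$; the latter must be shown to be small \emph{uniformly in} $t$, which is the crux, and follows since the uniform $\ell^r$ bound (with $r<+\infty$) has a uniformly vanishing tail. Letting $N\to+\infty$ gives $f\in C^0_T(\B)$. In the non-separable endpoint $r=+\infty$ this tail argument breaks down, because $S_N f_0 \not\to f_0$ strongly in $B^s_{\infty,\infty}$; there one only recovers weak-$*$ continuity $C^0_w\big([0,T];\B\big)$, supplemented by strong continuity in every $B^{s'}_{\infty,\infty}$ with $s'<s$, obtained by interpolating the uniform $L^\infty_T(B^s)$ bound against the lower-order continuity. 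Securing this dichotomy correctly, together with uniqueness at the critical regularity $s=r=1$, is where the real work lies.
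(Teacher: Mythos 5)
The paper itself gives no proof of Theorem \ref{th:transport}: it is quoted (in the case $p=+\infty$) from Chapter 3 of \cite{BCD}, and your proposal reconstructs precisely that classical argument --- frequency localization, the commutator bound of Lemma \ref{l:CommBCD} absorbed via the Lipschitz condition, Minkowski plus a Gr\"onwall integrating factor for the a priori estimate, regularization/characteristics/compactness for existence, and the low/high frequency splitting for time continuity --- so your approach coincides with the intended one. One step should be tightened: the uniform-in-$t$ smallness of the high-frequency tail does \emph{not} follow from the mere boundedness of $\bigl(2^{js}\|\Delta_j f(t)\|_{L^\infty}\bigr)_{j}$ in $\ell^r$, since a bounded family in $\ell^r$ need not have uniformly vanishing tails (consider shifted basis vectors). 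What makes the argument work is the blockwise a priori estimate itself, which dominates $2^{js}\|\Delta_j f(t)\|_{L^\infty}$, uniformly in $t\in[0,T]$, by the \emph{fixed} $\ell^r$ sequence $2^{js}\|\Delta_j f_0\|_{L^\infty}+\int_0^T 2^{js}\|\Delta_j g(\tau)\|_{L^\infty}\,{\rm d}\tau+C\int_0^T c_j(\tau)\,\|\nabla v(\tau)\|_{B^{s-1}_{\infty,r}}\,\|f(\tau)\|_{\B}\,{\rm d}\tau$; this single sequence lies in $\ell^r$ (by Minkowski and the fact that the $c_j(\tau)$ belong to the unit ball of $\ell^r$), hence its tail beyond $j>N$ vanishes as $N\to+\infty$, and this gives the required uniformity. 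With that correction, your sketch is the standard proof the paper relies on.
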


%\begin{rmk}
%In fact, Theorem 3.19 in \cite{BCD} provides a better inequality than \eqref{eq:TVEstimate}. In the lefthand side, the space $L^\infty_T(\B)$ can be replaced by the time-space
%Besov space $\widetilde{L^\infty_T}(\B)$ (see Subsection 2.6.3 in \cite{BCD}).
%\end{rmk}

%Next,
As discovered by Vishik \cite{Vis} and, with a different proof, by Hmidi and Keraani \cite{HK}, the previous statement can be improved when the Besov regularity index is $s=0$,
provided $\D(v) = 0$. Precisely, under these conditions,
the estimate in Theorem \ref{th:transport} can be replaced by an inequality which is linear with respect to $\|\nabla v\|_{L^1_T(L^\infty)}$.

\begin{thm}\label{th:AnnInnLinTV}
Assume that $\nabla v \in L^1_T(L^\infty)$ and that $v$ is divergence-free. Let $r \in [1, +\infty]$.
Then there exists a constant $C = C(d)$ such that, for any solution $f$ to problem \eqref{eq:TV} in $C^0\big([0,T];B^0_{\infty,r}\big)$, with the usual modification of $C^0$ into $C^0_w$
if $r=+\infty$, we have
\[ %\begin{equation}\label{eq:AnnInnLinTV}
\| f \|_{L^\infty_T(B^0_{\infty, r})}\, \leq\, C\, \bigg\{ \| f_0 \|_{B^0_{\infty, r}}\, +\, \| g \|_{L^1_T(B^0_{\infty, r})} \bigg\}\;\left( 1+\int_0^T\| \nabla v(\tau) \|_{L^\infty}{\rm d} \tau \right)\,.
\] %\end{equation}
\end{thm}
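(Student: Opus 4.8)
The plan is to run the classical Littlewood--Paley energy method for the transport equation \eqref{eq:TV}, but to replace its final Grönwall step -- which only produces exponential dependence on $\int_0^T\|\nabla v\|_{L^\infty}$, exactly as in Theorem \ref{th:transport} -- by the sharper frequency bookkeeping of Vishik \cite{Vis} and Hmidi--Keraani \cite{HK}. Throughout I set $V(t):=\int_0^t\|\nabla v(\tau)\|_{L^\infty}\,{\rm d}\tau$. First I would localise in frequency: applying $\Delta_j$ to \eqref{eq:TV} gives
\[
\partial_t\Delta_j f + v\cdot\nabla\Delta_j f = \Delta_j g + R_j, \qquad R_j := \big[v\cdot\nabla,\Delta_j\big]f .
\]
Since $v$ is divergence-free and Lipschitz, its flow is a measure-preserving bi-Lipschitz homeomorphism, along which the $L^\infty$ norm is conserved; integrating the localised equation along characteristics yields, for every $j\geq -1$,
\[
\|\Delta_j f(t)\|_{L^\infty}\leq \|\Delta_j f_0\|_{L^\infty} + \int_0^t\|\Delta_j g(\tau)\|_{L^\infty}\,{\rm d}\tau + \int_0^t\|R_j(\tau)\|_{L^\infty}\,{\rm d}\tau .
\]
Everything then hinges on controlling the commutators $R_j$.

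Second, I would establish a commutator estimate tailored to the endpoint index $s=0$. Note that Lemma \ref{l:CommBCD} does \emph{not} apply here, as it requires $s>1$ or $s=r=1$; instead I would write $v\cdot\nabla f=\div(fv)$ (using $\div v=0$) and expand each product by Bony's paraproduct, following the proof of Lemma 2.100 in \cite{BCD}. The mean-value structure of the kernel of $\Delta_j$ produces a gain of $2^{-j}$ on the ``diagonal'' paraproduct term, while the divergence-free condition is what lets one tame the remainder and the $\mathcal{T}_{\nabla f}v$ contributions. The target is a bound of the form $\|R_j\|_{L^\infty}\leq C\,c_j\,\|\nabla v\|_{L^\infty}\,\|f\|_{B^0_{\infty,r}}$ with $(c_j)_j$ in the unit ball of $\ell^r$, together with a more precise, frequency-resolved version displaying how $\Delta_j f$ interacts only with a bounded window of neighbouring and higher-frequency blocks.

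The main obstacle -- and the genuine content of the statement -- is the third step: upgrading the resulting a priori inequality from exponential to linear dependence on $V(t)$. Indeed, inserting the crude commutator bound into the characteristic estimate and summing in $\ell^r$ gives
\[
F(t)\leq \|f_0\|_{B^0_{\infty,r}}+\|g\|_{L^1_t(B^0_{\infty,r})}+C\int_0^t\|\nabla v(\tau)\|_{L^\infty}\,F(\tau)\,{\rm d}\tau, \qquad F:=\|f\|_{L^\infty_t(B^0_{\infty,r})},
\]
and Grönwall only delivers the factor $e^{CV(t)}$ already present in Theorem \ref{th:transport}; naive time-splitting or a fixed-point on small $V$-increments likewise reproduces an exponential. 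To remove it I would follow Vishik's argument: pass to the rescaled time variable $V$ (so that $\|\nabla v\|_{L^\infty}\,{\rm d}\tau={\rm d}V$) and exploit that, precisely at the borderline regularity $s=0$ and for divergence-free $v$, the frequency interaction in $R_j$ is localised enough that the norm accumulated over a unit increment of $V$ is controlled \emph{additively} rather than \emph{multiplicatively}. This is implemented through a frequency-by-frequency induction, or an equivalent telescoping/summation, which prevents the geometric amplification responsible for the exponential. I expect this combinatorial summation step, rather than any single estimate, to be the crux; by contrast, the initial datum $f_0$ and the source $g$ enter linearly and are carried along by superposition with no extra difficulty, ultimately producing the claimed bound with its $(1+V(T))$ factor.
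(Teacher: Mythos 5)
The paper itself gives no proof of this theorem: it is recalled as a known tool, due to Vishik \cite{Vis} and Hmidi--Keraani \cite{HK} (it is essentially Theorem 3.18 of \cite{BCD}), so your attempt must be judged against those proofs. Your Steps 1 and 2 are correct but only reproduce the machinery that already yields Theorem \ref{th:transport}: frequency localisation, conservation of $\|\Delta_j f\|_{L^\infty}$ along the flow, and a borderline commutator bound $\big\|[v\cdot\nabla,\Delta_j]f\big\|_{L^\infty}\leq C\,c_j\,\|\nabla v\|_{L^\infty}\|f\|_{B^0_{\infty,r}}$. As you yourself note, Gr\"onwall then gives $e^{CV(T)}$, i.e.\ nothing beyond Theorem \ref{th:transport}. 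The entire content of the statement is therefore your Step 3, and there you do not give an argument: ``pass to the rescaled time $V$'', ``controlled additively rather than multiplicatively'', ``a frequency-by-frequency induction, or an equivalent telescoping/summation'' describe a hoped-for conclusion, not a mechanism. Moreover, the route cannot be completed inside the framework you set up: the only information you retain is the scalar inequality $F(t)\leq A+C\int_0^t\|\nabla v\|_{L^\infty}F\,{\rm d}\tau$ (or, at best, the system of block inequalities with a band-limited interaction matrix of size $\|\nabla v(t)\|_{L^\infty}$), and such inequalities are genuinely compatible with exponentially growing solutions; no rearrangement or resummation of them can produce a linear bound. One must re-inject finer, Lagrangian information about the flow.

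The missing idea is a decomposition of the \emph{data} rather than of the solution. By linearity, write $f=\sum_{j'}f_{j'}$, where $f_{j'}$ solves \eqref{eq:TV} with datum $\Delta_{j'}f_0$ and source $\Delta_{j'}g$. Each piece obeys the loss-free bound $\|f_{j'}(t)\|_{L^\infty}\leq\|\Delta_{j'}f_0\|_{L^\infty}+\int_0^t\|\Delta_{j'}g\|_{L^\infty}{\rm d}\tau=:a_{j'}(t)$, by transport along the flow. The second, crucial ingredient is a quasi-localisation lemma: for $j\geq j'$, propagating $\nabla f_{j'}$ and using Bernstein gives $\|\Delta_j f_{j'}(t)\|_{L^\infty}\lesssim 2^{j'-j}e^{CV(t)}a_{j'}(t)$; for $j\leq j'$, one writes $\Delta_{j'}f_0$ (and $\Delta_{j'}g$) in divergence form, $\Delta_{j'}f_0=2^{-j'}{\rm div}\, F_{j'}$ with $\|F_{j'}\|_{L^\infty}\lesssim\|\Delta_{j'}f_0\|_{L^\infty}$, and uses that composition with a \emph{measure-preserving} bi-Lipschitz flow preserves the divergence structure --- this is exactly where $\div(v)=0$ enters --- to get $\|\Delta_j f_{j'}(t)\|_{L^\infty}\lesssim 2^{j-j'}e^{CV(t)}a_{j'}(t)$. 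Combining each of these with the trivial loss-free bound, one finds that $\|\Delta_j f(t)\|_{L^\infty}$ is controlled by the convolution of $\big(a_{j'}(t)\big)_{j'}$ with the kernel $K(m)=\min\big(1,2^{-|m|}e^{CV(t)}\big)$, whose $\ell^1$ norm is $O\big(1+V(t)\big)$: Young's inequality $\ell^1*\ell^r\to\ell^r$ then yields precisely the claimed linear factor. This min-kernel summation, entirely absent from your proposal, \emph{is} the theorem.
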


\subsection{Leray projection} \label{ss:Leray}

This paragraph is concerned with the Leray projection operator $\P$, which is defined as
\[
\P\,:=\,\Id\,+\,\nabla(-\Delta)^{-1}\div
\]
in the sense of Fourier multipliers, namely
\begin{equation*}
\forall f \in \mc S, \qquad \what{(\P f)_j} (\xi) = \what{f_j}(\xi) -  \sum_{k} \frac{\xi_j \xi_k}{|\xi|^2} \what{f_k}(\xi).
\end{equation*}

The operator $\P$ can also be seen as a singular integral operator. Thus, Calder\'on-Zygmund theory may be applied to prove that $\P$ defines a bounded operator in the
$L^p \tend L^p$ topology, for any $1 < p < + \infty$.

In the endpoint space $L^\infty$, the Leray projector is no longer defined as a Fourier multiplier, because of the singularity of its symbol at $\xi = 0$.
However, for $j \in \{1, ..., d \}$, one may give sense of the operator $\Delta_{-1} \P \partial_j$ in $L^\infty$,
by using the integrability properties of the fundamental solution of the Laplacian. In fact, we have the following result, which corresponds to Proposition 8 of \cite{PP}.

\begin{prop} \label{p:TrucDesChinois}
Let $j \in \{ 1, ..., d \}$. We have a bounded operator
\begin{equation*}
\Delta_{-1} \P \, \partial_j : L^\infty \tend L^\infty.
\end{equation*}
\end{prop}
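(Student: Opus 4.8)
The plan is to realise $\Delta_{-1}\P\,\partial_j$ as a (matrix-valued) convolution operator and to prove that its kernel belongs to $L^1(\R^d)$; the $L^\infty\to L^\infty$ bound then follows at once from Young's inequality, since $\|k*f\|_{L^\infty}\le\|k\|_{L^1}\|f\|_{L^\infty}$. This is also the natural way to \emph{define} the operator on $L^\infty$, where $\P$ by itself makes no sense because of the singularity of its symbol at $\xi=0$: the low-frequency truncation $\Delta_{-1}$ is precisely what is not covered by Lemma \ref{l:homMult}, which only handles the blocks $\Delta_j$ with $j\ge0$ whose support avoids the origin. Recalling $\P=\Id+\nabla(-\Delta)^{-1}\div$ and that $\Delta_{-1}=\chi(D)$, the $(i,k)$ entry of the symbol of $\Delta_{-1}\P\,\partial_j$ is
\begin{equation*}
m_{ik}(\xi)\,=\,i\,\chi(\xi)\,\xi_j\,\delta_{ik}\,-\,i\,\chi(\xi)\,\frac{\xi_i\,\xi_j\,\xi_k}{|\xi|^2}\,.
\end{equation*}

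The first term $i\chi(\xi)\xi_j\delta_{ik}$ is a smooth, compactly supported symbol, so its inverse Fourier transform is a Schwartz function and in particular lies in $L^1$; this contribution is harmless. It thus remains to treat the homogeneous-of-degree-one piece $n_{ik}(\xi):=\chi(\xi)\,\xi_i\xi_j\xi_k/|\xi|^2$. Here I would bring in the fundamental solution $E$ of $-\Delta$ (so $E(x)=c_d|x|^{2-d}$ for $d\ge3$ and $E(x)=-\tfrac1{2\pi}\log|x|$ for $d=2$): since $\xi_i\xi_j\xi_k/|\xi|^2$ is the symbol of $c\,\partial_i\partial_j\partial_k E$, one has $\mathcal{F}^{-1}(n_{ik})=K_{-1}*G$, where $K_{-1}=\mathcal{F}^{-1}\chi$ is a fixed Schwartz function and $G:=c\,\partial_i\partial_j\partial_k E$ is homogeneous of degree $-d-1$, smooth on $\R^d\setminus\{0\}$, and a Calder\'on--Zygmund distribution at the origin. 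The goal is to show $K_{-1}*G\in L^1(\R^d)$.

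For the local part, I would simply note that $n_{ik}$ is bounded with compact support, hence $n_{ik}\in L^1(\R^d)$, so by Riemann--Lebesgue (or Paley--Wiener) $\mathcal{F}^{-1}(n_{ik})$ is continuous and bounded on $\R^d$; in particular it is integrable on the unit ball. For the decay at infinity, I would fix $|x|\ge2$ and split $K_{-1}*G(x)=\int K_{-1}(y)\,G(x-y)\,dy$ according to $|y|\le|x|/2$ and $|y|>|x|/2$. On the first region $|x-y|\ge|x|/2$, so $G(x-y)$ is a genuine smooth function with $|G(x-y)|\lesssim|x|^{-d-1}$, and integrating against $\|K_{-1}\|_{L^1}$ yields a bound $\lesssim|x|^{-d-1}$. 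On the second region the singularity of $G$ (at $y=x$) is present, but there $K_{-1}$ and all its derivatives are $\lesssim_N|x|^{-N}$; using the cancellation of the Calder\'on--Zygmund distribution $G$ to pair it against the smooth, rapidly decaying $K_{-1}(x-\cdot)$, this contribution is $\lesssim_N|x|^{-N}$, which for $N$ large is also $\lesssim|x|^{-d-1}$. Altogether $|\mathcal{F}^{-1}(n_{ik})(x)|\lesssim|x|^{-d-1}$ for $|x|\ge2$, and combined with boundedness near the origin this gives $\mathcal{F}^{-1}(n_{ik})\in L^1(\R^d)$.

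Putting the two contributions together, $\Delta_{-1}\P\,\partial_j$ is convolution against an $L^1$ kernel, and Young's inequality closes the argument. The main obstacle is the far-field estimate on the region $|y|>|x|/2$: there $G$ is not locally integrable, so one cannot simply bound $|G|$ pointwise, and one has to exploit the mean-zero/Calder\'on--Zygmund structure of $\partial_i\partial_j\partial_k E$ (equivalently, the smoothness of $n_{ik}$ away from the origin) to convert the decay of $K_{-1}$ and its derivatives into decay of the convolution. I note that the argument is uniform in the dimension: for $d=2$ the logarithm in $E$ is annihilated by the three derivatives, so $G$ is again homogeneous of degree $-d-1=-3$ and the estimates are unchanged.
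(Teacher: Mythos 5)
Your proof is correct, and it is exactly the argument this paper relies on: the paper gives no proof of Proposition \ref{p:TrucDesChinois}, deferring to Proposition 8 of \cite{PP}, and the method there is the one you carry out --- realise $\Delta_{-1} \P \, \partial_j$ as convolution with the low-frequency localisation of third derivatives of the Newtonian potential, show this kernel lies in $L^1(\R^d)$, and conclude by Young's inequality.

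Two technical points are worth tightening, both in your far-field estimate. First, the distribution $G = c\,\partial_i\partial_j\partial_k E$ is not just a finite-part kernel: since $\partial_i\partial_j E$ is a principal value plus a multiple of $\delta_0$, $G$ contains derivative-of-delta terms; these contribute multiples of $\partial_k K_{-1}$ to the kernel, which are Schwartz and hence harmless, but they should be acknowledged. Second, your pairing of the finite-part distribution $G(x-\cdot)$ against the sharp cutoff $\mathds{1}_{\{|y|>|x|/2\}}K_{-1}(y)$ needs either a smooth partition of unity or the remark that the cutoff boundary avoids the singular support. Both issues disappear if you flip the derivatives onto the Schwartz factor: write the kernel of $\Delta_{-1}\nabla(-\Delta)^{-1}\div\,\partial_j$ as $E * \big(\partial_i\partial_j\partial_k K_{-1}\big)$, which is a genuine, absolutely convergent integral because $E$ is locally integrable with at most logarithmic growth and $\partial_i\partial_j\partial_k K_{-1}$ is Schwartz. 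The cancellation you need is then the vanishing of the moments $\int y^\al\, \partial_i\partial_j\partial_k K_{-1}(y)\,{\rm d} y = 0$ for $|\al|\leq 2$ (integrate by parts three times), so that for $|x|\geq 2$ you may subtract the second-order Taylor polynomial of $E$ at $x$ and bound the remainder by $|y|^3 \sup |\nabla^3 E| \lesssim |y|^3 |x|^{-d-1}$ on the region $|y|\leq |x|/2$, the complementary region being controlled by the rapid decay of $\partial_i\partial_j\partial_k K_{-1}$. This yields the same $|x|^{-d-1}$ decay, hence the $L^1$ bound, without ever manipulating distributions more singular than $E$ itself; it is also closer to the paper's own hint about ``the integrability properties of the fundamental solution of the Laplacian''.
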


With this proposition at hand, we can prove the following statement. It is a commutator estimate between a transport operator and the Leray projector
in the critical Besov space $B^1_{\infty,1}$. It corresponds to Lemma 2.5 in \cite{Cobb-F_Rig}, although that result did not deal with the endpoint exponent $p = +\infty$.

\begin{lemma}\label{l:prodEstimates}
Let $f, g \in L^2 \cap B^1_{\infty, 1}$ be two vector fields such that $\div(f)\,=\,\div(g)\,=\,0$. The following inequality holds true:
\begin{equation*}
\left\| \big[ f \cdot \nabla, \P \big] g \right\|_{B^1_{\infty, 1}} \lesssim \left\| f \right\|_{B^1_{\infty, 1}} \left\| g \right\|_{B^1_{\infty, 1}}.
\end{equation*}
\end{lemma}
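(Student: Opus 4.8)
The plan is to estimate the commutator $[f \cdot \nabla, \P]g = (f \cdot \nabla)(\P g) - \P((f \cdot \nabla)g)$ by expanding both transport terms via the Bony paraproduct decomposition and exploiting the fact that $\P$ commutes with the Littlewood-Paley blocks $\Delta_j$ for $j \geq 0$ (since $\P$ is a homogeneous Fourier multiplier away from the origin), so that the only genuine obstruction to commutation comes from the paraproduct pieces and from the low-frequency block $\Delta_{-1}$, where $\P$ is singular. First I would write $(f\cdot\nabla)g = \mathcal{T}_{f^k}(\partial_k g) + \mathcal{T}_{\partial_k g}(f^k) + \mathcal{R}(f^k, \partial_k g)$ (summation over $k$) and correspondingly for $(f\cdot\nabla)(\P g)$. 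Since $\div f = 0$, the transport term can be rewritten in divergence form $(f\cdot\nabla)g = \div(f \otimes g)$, which will be convenient for handling derivatives landing on the high-frequency factor.

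The main analytic tool is Lemma \ref{l:ParaComm}: for the paraproduct piece $\mathcal{T}_{f^k}(\partial_k \cdot)$, the commutator $[\mathcal{T}_{f^k}, \P]$ with the homogeneous degree-$0$ multiplier $\P$ is controlled in $B^{s+1}_{\infty,r}$ by $\|\nabla f\|_{L^\infty}\|\cdot\|_{B^s_{\infty,r}}$. Applied with $s=0$, $r=1$ to the argument $\partial_k g$ (which sits in $B^0_{\infty,1}$ when $g \in B^1_{\infty,1}$), this yields a bound of $[\mathcal{T}_{f^k}, \P]\partial_k g$ in $B^1_{\infty,1}$ by $\|\nabla f\|_{L^\infty}\|\nabla g\|_{B^0_{\infty,1}} \lesssim \|f\|_{B^1_{\infty,1}}\|g\|_{B^1_{\infty,1}}$, which is exactly the desired estimate for the dominant term. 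The remaining terms — the symmetric paraproduct $\mathcal{T}_{\partial_k g}(f^k)$, the remainder $\mathcal{R}(f^k, \partial_k g)$, and their images under $\P$ — do not require a commutator structure: I would bound each term and its $\P$-image separately using the continuity properties of $\mathcal{T}$ and $\mathcal{R}$ from Proposition \ref{p:op} together with Lemma \ref{l:homMult} (which gives the boundedness of $\P = \sigma(D)$ on each dyadic block for $j \geq 0$), absorbing everything into $\|f\|_{B^1_{\infty,1}}\|g\|_{B^1_{\infty,1}}$.

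The step I expect to be the main obstacle is the low-frequency block $\Delta_{-1}$, where $\P$ is not an honest Fourier multiplier because of the singularity of its symbol $\xi_j\xi_k/|\xi|^2$ at $\xi = 0$, so Lemma \ref{l:homMult} and the paraproduct commutator lemma do not directly apply at this frequency. This is precisely where the finite-energy hypothesis $f, g \in L^2$ and the divergence-free conditions enter. I would isolate $\Delta_{-1}[f\cdot\nabla, \P]g$ and handle it using Proposition \ref{p:TrucDesChinois}, which provides the boundedness of $\Delta_{-1}\P\partial_j : L^\infty \to L^\infty$. Writing the transport term in divergence form $(f\cdot\nabla)g = \partial_k(f^k g)$ lets me move the derivative outside and apply $\Delta_{-1}\P\partial_k$ to the product $f^k g \in L^\infty$ (controlled by $\|f\|_{L^\infty}\|g\|_{L^\infty} \lesssim \|f\|_{B^1_{\infty,1}}\|g\|_{B^1_{\infty,1}}$ via the embedding $B^1_{\infty,1}\hookrightarrow L^\infty$); the other half $\Delta_{-1}(f\cdot\nabla)(\P g)$ is estimated directly since $\Delta_{-1}$ restricts to low frequencies where a single $L^\infty$ bound on $f$, $\nabla\P g$ suffices. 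Since $B^1_{\infty,1}$ is a single-index low-frequency-summable space, controlling this finite block in $L^\infty$ is enough to control its $B^1_{\infty,1}$ contribution. Combining the high-frequency commutator estimate with this low-frequency bound gives the claim.
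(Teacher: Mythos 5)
Your proposal follows essentially the same route as the paper's proof: Bony decomposition of the commutator; Lemma \ref{l:ParaComm} for the paraproduct term $\big[\mc T_{f_k},\P\big]\partial_k g$, which is indeed the only place where the commutator structure is genuinely needed; Lemma \ref{l:homMult} to make $\P$ harmless on the blocks $\Delta_m$ with $m\geq 0$; and Proposition \ref{p:TrucDesChinois}, applied after putting the transport term in divergence form (legitimate since $\div(f)=0$), to deal with the low-frequency block where the symbol of $\P$ is singular.

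One step, however, is used tacitly but never justified: the identity $\P g = g$, which holds because $g\in L^2$ is divergence-free, and which is the paper's opening observation. You need it in at least two places. First, your enumeration of the ``remaining terms'' presumes it: the Bony expansion of $(f\cdot\nabla)(\P g)$ produces $\mc T_{\partial_k \P g}(f_k)$ and $\mc R(f_k,\partial_k\P g)$, and these coincide with $\mc T_{\partial_k g}(f_k)$ and $\mc R(f_k,\partial_k g)$ only once $\P g=g$ is known. Second, your low-frequency step invokes an $L^\infty$ bound on $\nabla \P g$; but boundedness of $\P$ on $W^{1,\infty}$ or on $B^1_{\infty,1}$ is exactly what fails (this failure is the reason the lemma is not trivial), so such a bound cannot be taken for granted: it follows either from $\P g=g$ again, or from a separate argument (Proposition \ref{p:TrucDesChinois} on $\Delta_{-1}$ together with Lemma \ref{l:homMult} on the blocks $m\geq0$, giving $\|\P\nabla g\|_{B^0_{\infty,1}}\lesssim\|g\|_{B^1_{\infty,1}}$). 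With that one-line observation added at the start, your argument is sound and matches the paper's, up to a cosmetic difference: the paper uses $\div(f)=0$ at the paraproduct level, writing $\sum_k\P\mc T_{\partial_k g}(f_k)=\sum_k\P\partial_k\mc T_g(f_k)$ and similarly for the remainder, so that $\P$ always appears composed with a derivative, whereas you invoke the divergence form only on the $\Delta_{-1}$ block of the full product; both variants work.
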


\begin{proof}
We start by noticing that, thanks to the regularity assumption on both $f$ and $g$, the commutator is well-defined, since $\P$ always acts on $L^2$ vector fields. This also guarantees
us that $\P g=g$.
Using this fact, we can write the Bony decomposition for the products involved in the commutator: we get
\begin{align*}
\big[ f \cdot \nabla, \P \big] g & = \sum_{k=1}^d\Big(\big[ \mc T_{f_k} , \P \big] \partial_k g + \mc T_{\P \partial_k g} (f_k)
- \P \mc T_{\partial_k g} (f_k) + \mc R (\P \partial_k g, f_k) - \P \mc R (\partial_k g, f_k)\Big) \\
& = \sum_{k=1}^d \Big( \big[ \mc T_{f_k} , \P \big] \partial_k g + \mc T_{\partial_k g} (f_k) - \P \partial_k \mc T_{g} (f_k) + \mc R (\partial_k g, f_k) - \P \partial_k \mc R (g, f_k)\Big)\,.
\end{align*}
The first, second and fourth summand are easy to bound, by using Lemma \ref{l:ParaComm} and Proposition \ref{p:op}.
The only new difficulty comes from the third and fifth terms, which involve the Leray projection.
However, since the operator $\P$ is always written in composition with a derivative $\partial_k$ in those terms, we may use Proposition \ref{p:TrucDesChinois} to write
\begin{equation*}
\begin{split}
\left\| \P \partial_k \mc T_{g} (f_k) \right\|_{B^1_{\infty, 1}} & \leq \left\| \Delta_{-1} \P \partial_k \mc T_{g} (f_k) \right\|_{L^\infty} + \sum_{m \geq 0} 2^m \left\| \Delta_m \P \mc T_{\partial_k g} (f_k) \right\|_{L^\infty} \\
& \lesssim \| \mc T_g (f) \|_{L^\infty} + \sum_{m \geq 0} 2^m \left\| \Delta_m \P \mc T_{\partial_k g} (f_k) \right\|_{L^\infty}.
\end{split}
\end{equation*}
Next, Lemma \ref{l:homMult} implies that, for $m \geq0$, the operator $\Delta_m \P$ is bounded on $L^\infty$. Thus, we may use Proposition \ref{p:op} to obtain
\[
2^m \left\| \Delta_m \P \mc T_{\partial_k g} (f_k) \right\|_{L^\infty}\,\lesssim\,\| f \|_{B^1_{\infty, 1}} \| g \|_{B^1_{\infty, 1}}\,c_m\,,
\]
for a suitable sequence $\big(c_m\big)_{m\in\N}\,\in\,\ell^1$ of unitary norm. In the end, we get the estimate
\begin{equation*}
\left\| \P \partial_k \mc T_{g} (f_k) \right\|_{B^1_{\infty, 1}} \lesssim \| f \|_{B^1_{\infty, 1}} \| g \|_{B^1_{\infty, 1}}\,.
\end{equation*}
For the fifth term $\P \partial_k \mc R (g, f_k)$, we can proceed in a similar way. This completes the proof of the sought bounds for the commutator.
\end{proof}

From the previous lemma, we immediately deduce the next result.
\begin{cor}\label{c:prodEstimates2}
Let $f\in L^2 \cap B^1_{\infty, 1}$ and $v \in L^2 \cap B^2_{\infty, 1}$ be two divergence-free vector fields. The following inequality holds true:
\begin{equation*}
\left\| \P (f \cdot \nabla) v \right\|_{B^1_{\infty, 1}} \lesssim  \left\| f \right\|_{B^1_{\infty, 1}} \left\| v \right\|_{B^2_{\infty, 1}}.
\end{equation*}
\end{cor}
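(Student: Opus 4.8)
The plan is to deduce this corollary from Lemma~\ref{l:prodEstimates} by exploiting the divergence-free structure and the commutation between $\P$ and the transport operator. Since $v$ is divergence-free, so is each component situation one would like to reduce to, but the cleanest route is to write $\P(f\cdot\nabla)v = (f\cdot\nabla)v - \big[f\cdot\nabla,\P\big]v + \big(\P-\Id\big)(f\cdot\nabla)v$ in a way that lets Lemma~\ref{l:prodEstimates} absorb the genuinely problematic part. More precisely, I would first observe that, because $\div(v)=0$, the quantity $(f\cdot\nabla)v$ need not itself be divergence-free, but $\P$ acting on it can be split as
\begin{equation*}
\P(f\cdot\nabla)v = (f\cdot\nabla)\P v + \big[\P, f\cdot\nabla\big]v = (f\cdot\nabla)v + \big[\P, f\cdot\nabla\big]v,
\end{equation*}
using that $\P v = v$ since $v$ is solenoidal and $L^2$.

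The two resulting terms are then estimated separately. For the commutator term $\big[\P, f\cdot\nabla\big]v = -\big[f\cdot\nabla,\P\big]v$, I would like to invoke Lemma~\ref{l:prodEstimates} directly; the obstacle here is that the lemma is stated for two fields both in $B^1_{\infty,1}$, whereas $v$ lives in $B^2_{\infty,1}$. The natural fix is to note that the commutator $\big[f\cdot\nabla,\P\big]v$ depends on $v$ only through $\nabla v$ (schematically it is built from paraproducts and remainders of $f$ against $\partial_k g$ with $g=v$), so one gains one derivative: applying the bound with the roles arranged so that $\nabla v\in B^1_{\infty,1}$ plays the part of the lower-regularity factor should yield
\begin{equation*}
\big\|\big[f\cdot\nabla,\P\big]v\big\|_{B^1_{\infty,1}} \lesssim \|f\|_{B^1_{\infty,1}}\,\|v\|_{B^2_{\infty,1}}.
\end{equation*}
Alternatively, and perhaps more robustly, I would re-run the Bony decomposition of Lemma~\ref{l:prodEstimates} directly for $\big[f\cdot\nabla,\P\big]v$, tracking that every appearance of $g$ is as $\partial_k g$, so that the $B^1_{\infty,1}$ norm on the commutator is controlled by $\|f\|_{B^1_{\infty,1}}$ times $\|\nabla v\|_{B^1_{\infty,1}}\lesssim\|v\|_{B^2_{\infty,1}}$.

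For the transport term $(f\cdot\nabla)v$, I would simply use that $B^1_{\infty,1}$ is a Banach algebra (it has positive regularity index $s=1>0$, as recorded after Proposition~\ref{p:op}), so that
\begin{equation*}
\big\|(f\cdot\nabla)v\big\|_{B^1_{\infty,1}} \lesssim \sum_{k=1}^d \big\|f_k\,\partial_k v\big\|_{B^1_{\infty,1}} \lesssim \|f\|_{B^1_{\infty,1}}\,\|\nabla v\|_{B^1_{\infty,1}} \lesssim \|f\|_{B^1_{\infty,1}}\,\|v\|_{B^2_{\infty,1}},
\end{equation*}
using the embedding $\|\nabla v\|_{B^1_{\infty,1}}\lesssim\|v\|_{B^2_{\infty,1}}$. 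Summing the two contributions gives the claim. The main obstacle I anticipate is the derivative-counting in the commutator term: one must make sure that the application of Lemma~\ref{l:prodEstimates} (or its re-derivation) genuinely only costs $\|\nabla v\|_{B^1_{\infty,1}}$ rather than $\|v\|_{B^1_{\infty,1}}$, i.e.\ that the extra derivative on $v$ is harmless and lands on the factor measured in the lower-order norm. This is exactly where the structure of the paraproduct decomposition in the proof of Lemma~\ref{l:prodEstimates}---where $g$ always appears differentiated---must be invoked, so the safest presentation is to refer back to that proof rather than treating the lemma as a black box.
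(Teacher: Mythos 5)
Your proposal is correct and is essentially the paper's own proof: the same decomposition $\P (f \cdot \nabla) v = (f \cdot \nabla) v - \big[ f\cdot\nabla, \P \big] v$ (valid since $\P v = v$ for the solenoidal finite-energy field $v$), with the product term handled by the algebra property of $B^1_{\infty,1}$ and the commutator by Lemma~\ref{l:prodEstimates}. The ``main obstacle'' you anticipate is illusory: since $B^2_{\infty,1} \hookrightarrow B^1_{\infty,1}$, the lemma applies verbatim with $g = v$ and gives $\big\| [f\cdot\nabla,\P] v \big\|_{B^1_{\infty,1}} \lesssim \|f\|_{B^1_{\infty,1}} \|v\|_{B^1_{\infty,1}} \lesssim \|f\|_{B^1_{\infty,1}} \|v\|_{B^2_{\infty,1}}$, so no derivative needs to be ``gained'' on the commutator and no re-derivation of the lemma is required.
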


\begin{proof}
For proving the previous statement, it is enough to write
\[
\P (f \cdot \nabla) v\,=\,(f \cdot \nabla) v\,-\,\big[f\cdot\nabla, \P\big]v\,,
\]
where we have used also the fact that $\div(v)=0$.
The first term in the right-hand side can be estimates directly, whereas we use the bounds of Lemma \ref{l:prodEstimates} for the second one.
%
%This ends the proof of the corollary.
\end{proof}

\section{Continuation criteria} \label{s:cont-crit}

In this section, we state and prove our main results concerning continuation criteria for solutions of the ideal MHD equations \eqref{ieq:mhd}. In Subsection \ref{ss:cont-u},
we focus on a criterion based only on the velocity field, while in Subsection \ref{ss:cont-els} we will present a continuation criterion in terms of the Els\"asser variables.

\subsection{A continuation criterion based on the velocity only} \label{ss:cont-u}

The main result of this section is the following statement.

\begin{thm}
Let $(u_0, b_0) \in L^2(\R^d) \cap B^1_{\infty, 1}(\R^d)$ be a set of divergence-free initial data. Consider $T > 0$ such that the ideal MHD system, supplemented with
those initial data, has a unique solution $(u, b)$ in the space $C^0\big([0,T[\,;L^2(\R^d)\,\cap\,B^1_{\infty, 1}(\R^d)\big)$.

Then this solution may be continued beyong the time $T$ provided that 
\begin{equation}\label{eq:contCritU}
\int_0^T \left\| \nabla^2 u(t) \right\|_{L^\infty} \dt < + \infty.
\end{equation}
\end{thm}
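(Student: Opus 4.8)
The plan is to reduce the continuation of $(u,b)$ to the already available criterion \eqref{eq:cont-Lip} of Theorem \ref{t_i:WP}: it suffices to deduce from \eqref{eq:contCritU} that $\int_0^T\big(\|\nabla u(t)\|_{L^\infty}+\|\nabla b(t)\|_{L^\infty}\big)\dt<+\infty$. The velocity part is essentially free. First, testing the two equations of \eqref{ieq:mhd} against $u$ and $b$ and using $\D(u)=\D(b)=0$ shows that the total energy $\|u(t)\|_{L^2}^2+\|b(t)\|_{L^2}^2$ is conserved, so $\|u(t)\|_{L^2}$ stays bounded on $[0,T[$. Next, splitting into low and high frequencies and using the Bernstein inequalities of Lemma \ref{l:bern} (so that $\|\Delta_j\nabla u\|_{L^\infty}\lesssim 2^{-j}\|\nabla^2 u\|_{L^\infty}$ for $j\geq 0$), I bound
\[
\|\nabla u\|_{L^\infty}\leq\|\Delta_{-1}\nabla u\|_{L^\infty}+\sum_{j\geq0}\|\Delta_j\nabla u\|_{L^\infty}\lesssim\|u\|_{L^2}+\sum_{j\geq0}2^{-j}\|\nabla^2 u\|_{L^\infty}\lesssim\|u\|_{L^2}+\|\nabla^2 u\|_{L^\infty}.
\]
Since $T<+\infty$, this is time-integrable by \eqref{eq:contCritU}, which gives $\int_0^T\|\nabla u\|_{L^\infty}\dt<+\infty$.

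For the magnetic field the key point is that its equation is a \emph{pure} transport equation, with no pressure term, so differentiating it preserves the transport structure. Applying $\partial_l$ to the magnetic field equation in \eqref{ieq:mhd} yields, for each $l$,
\[
\partial_t\partial_l b+(u\cdot\nabla)\partial_l b=-(\partial_l u\cdot\nabla)b+(\partial_l b\cdot\nabla)u+(b\cdot\nabla)\partial_l u,
\]
which is again transported by the divergence-free, Lipschitz field $u$. Because I now estimate in $L^\infty$ rather than in $B^0_{\infty,1}$, the maximum-principle bound for transport by a divergence-free field applies with no loss of derivatives, and all three forcing terms are controlled in $L^\infty$ by products of first derivatives, \emph{except} that the last one requires the second derivative of $u$, namely $\|(b\cdot\nabla)\partial_l u\|_{L^\infty}\leq\|b\|_{L^\infty}\|\nabla^2 u\|_{L^\infty}$. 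Controlling $\|b\|_{L^\infty}$ beforehand is immediate: the same $L^\infty$ transport estimate applied to $b$ itself gives $\|b(t)\|_{L^\infty}\leq\|b_0\|_{L^\infty}\exp\!\big(\int_0^t\|\nabla u\|_{L^\infty}\big)$, finite by the previous step. Collecting the bounds, $B(t):=\|\nabla b(t)\|_{L^\infty}$ satisfies
\[
B(t)\leq B(0)+\int_0^t\Big(2\,\|\nabla u(\tau)\|_{L^\infty}\,B(\tau)+\|b(\tau)\|_{L^\infty}\,\|\nabla^2 u(\tau)\|_{L^\infty}\Big)\,{\rm d}\tau,
\]
and Gronwall's lemma, together with $\int_0^T\|\nabla u\|_{L^\infty}\dt<+\infty$ and $\int_0^T\|\nabla^2 u\|_{L^\infty}\dt<+\infty$, yields $\sup_{[0,T[}\|\nabla b\|_{L^\infty}<+\infty$, hence $\int_0^T\|\nabla b\|_{L^\infty}\dt<+\infty$. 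This is precisely where \eqref{eq:contCritU} enters, and it explains both why \emph{second} derivatives of $u$ are unavoidable (the forcing $(b\cdot\nabla)\nabla u$ cannot be closed on first derivatives alone) and why the $L^\infty$ norm suffices (no frequency summation is needed once one stays in $L^\infty$).

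The step I expect to require the most care is the rigorous justification of the $L^\infty$ transport estimate for $\partial_l b$, since a priori $b$ only belongs to $B^1_{\infty,1}$, so that $\partial_l b\in B^0_{\infty,1}\hookrightarrow L^\infty$ but $\nabla\partial_l b$ is merely a distribution and the equation above holds only in $\mc S'$. To make the maximum principle rigorous I would either invoke the theory of renormalised/Lagrangian solutions for transport along the bi-Lipschitz, measure-preserving flow of $u$ (legitimate since $u\in L^1_T(W^{1,\infty})$ with $\D(u)=0$), or regularise the initial data, run the estimate on the resulting smooth solutions where every quantity above is classical, and pass to the limit using the stability of the $B^1_{\infty,1}$ solutions provided by Theorem \ref{t_i:WP}. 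Once $\int_0^T\big(\|\nabla u\|_{L^\infty}+\|\nabla b\|_{L^\infty}\big)\dt<+\infty$ has been established, the continuation criterion \eqref{eq:cont-Lip} of Theorem \ref{t_i:WP} extends $(u,b)$ beyond $T$ with the same regularity, which completes the proof.
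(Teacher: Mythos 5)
Your proof is correct and follows essentially the same route as the paper: reduce to the known criterion \eqref{eq:cont-Lip}, control $\|\nabla u\|_{L^\infty}$ by $\|u\|_{L^2}+\|\nabla^2 u\|_{L^\infty}$ via a low/high frequency splitting and Bernstein, then differentiate the (pressureless) magnetic field equation and close a linear Gr\"onwall estimate for $\|\nabla b\|_{L^\infty}$. The only cosmetic difference is that you bound $\|b\|_{L^\infty}$ a priori by a separate Gr\"onwall on the $b$-equation, whereas the paper bounds it by $\|(u_0,b_0)\|_{L^2}+\|\nabla b\|_{L^\infty}$ through the same frequency-splitting trick and absorbs it into the main Gr\"onwall loop; both are valid, and your extra remarks on justifying the $L^\infty$ estimate at the $B^1_{\infty,1}$ regularity level (regularization or Lagrangian transport) address a point the paper passes over silently.
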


\begin{proof}
As far as continuation results go (keep in mind Theorem \ref{t_i:WP}), we already know that the solution may be prolonged beyond time $T$ if we have %the integral condition in \eqref{i_eq:Lip} is verified.
\begin{equation}\label{eq:contProofEQ2}
\int_0^T \Big( \| \nabla u \|_{L^\infty} + \| \nabla b \|_{L^\infty} \Big) \dt < +\infty.
\end{equation}
%We refer to Theorem 2.4 and Proposition 5.7 in \cite{CF3} for a proof of this statement.
Therefore, we only have to show that the integral in \eqref{eq:contProofEQ2} is finite under condition \eqref{eq:contCritU}.

We start by recalling that, by a simple energy method, we have
\begin{equation} \label{est:energy}
\sup_{t\in[0,T[}\Big(\left\|u(t)\right\|_{L^2}\,+\,\left\|b(t)\right\|_{L^2}\Big) \lesssim \left\|u_0\right\|_{L^2}\,+\,\left\|b_0\right\|_{L^2}.
\end{equation}
By using this bound together with the Bernstein inequalities, we can estimate
\begin{align}
\| \nabla u \|_{L^\infty} & \leq \| \Delta_{-1} \nabla u \|_{L^\infty} + \sum_{m \geq 0} \| \Delta_m \nabla u \|_{L^\infty}\,\lesssim 
\| u \|_{L^2} + \sum_{m \geq 0} 2^{-m} \| \Delta_m \nabla^2 u \|_{L^\infty}  \label{est:Du} \\
&  \lesssim \left\|\big( u_0, b_0\big) \right\|_{L^2} + \| \nabla^2 u \|_{L^\infty}. \nonumber
\end{align}
Thus, under condition \eqref{eq:contCritU} we deduce that $\| \nabla u \|_{L^1_T(L^\infty)}\,\lesssim\,T\,\left\| \big( u_0, b_0\big) \right\|_{L^2} + \| \nabla^2 u \|_{L^1_T(L^\infty)}\,<\,+\infty$.

It remains us to show that also $\| \nabla b \|_{L^1_T(L^\infty)}$ is finite.
This will be a consequence of the fact that $b$ solves a linear transport equation, which we may differentiate to obtain estimates on the first derivative $\nabla b$. Precisely, for $j = 1, 2$, we have
\begin{equation*}
\partial_t \partial_j b + (u \cdot \nabla) \partial_j b = - ( \partial_j u \cdot \nabla) b + (\partial_j b \cdot \nabla) u + (b \cdot \nabla) \partial_j u.
\end{equation*}
A basic $L^\infty$-estimate immediately gives, for all $0\leq t<T$, the bound
\begin{equation}\label{eq:contProofEQ1}
\| \nabla b(t) \|_{L^\infty} \leq \| \nabla b_0 \|_{L^\infty} + \int_0^t \Big\{ \| \nabla u \|_{L^\infty} \| \nabla b \|_{L^\infty} + \| b \|_{L^\infty} \| \nabla^2 u \|_{L^\infty} \Big\} {\rm d}\tau
\end{equation}
The term $\| \nabla u \|_{L^\infty}$ has already been estimated in \eqref{est:Du}. Arguing similarly, we can find an upper bound for $\| b \|_{L^\infty}$
which involve only the quantities we have at our disposal: by separating low and high frequencies, we have
\begin{align*}
\| b \|_{L^\infty} & \leq \| \Delta_{-1} b \|_{L^\infty} + \sum_{m \geq 0} \| \Delta_m b \|_{L^\infty}\, \lesssim \| b \|_{L^2} + \sum_{m \geq 0} 2^{-m} \| \Delta_m \nabla b \|_{L^\infty} \\
& \lesssim \left\| \big( u_0, b_0\big)\right\|_{L^2} + \| \nabla b \|_{L^\infty}.
\end{align*}
Plugging \eqref{est:Du} and the previous bound into \eqref{eq:contProofEQ1}, we obtain an integral inequality which is linear with respect to $\| \nabla b \|_{L^\infty}$: for
all $0\leq t<T$, we have
\begin{align*}
\sup_{\tau\in[0,t]}\| \nabla b(\tau) \|_{L^\infty} &\lesssim \| \nabla b_0 \|_{L^\infty} + \left\| \big(u_0, b_0\big) \right\|_{L^2} \int_0^t \| \nabla^2 u \|_{L^\infty} {\rm d}\t \\
&\qquad\qquad\qquad
+ \int_0^t \| \nabla b \|_{L^\infty} \Big( \left\| \big(u_0, b_0\big) \right\|_{L^2} + \| \nabla^2 u \|_{L^\infty} \Big) {\rm d}\t.
\end{align*}
By using Gr\"onwall's lemma, we deduce that $\| \nabla b \|_{L^\infty}$ must be bounded as long as condition \eqref{eq:contCritU} is fulfilled. So the integral \eqref{eq:contProofEQ2}
must therefore also be finite while \eqref{eq:contCritU} holds.
\end{proof}

We conclude this part with a remark concerning a continuation criterion in terms of the magnetic field only.
\begin{rmk}
The evolution of the velocity field is dictated by the momentum equation, which is quadratic with respect to $u$.
This implies that it is not possible, in general, to find the kind of linear estimates that would yield a continuation criterion dispensing of any condition on the velocity field.
However, in the special case of space dimension $d = 2$, the vorticity equation is linear in $u$: we have
\begin{equation*}
\partial_t \omega + u \cdot \nabla \omega = b \cdot \nabla j\,.
\end{equation*}
Therefore, in that case it is possible to bound $u$, or $\omega$, given good enough bounds on the magnetic field. This leads to a continuation criterion based only on $b$:
we have $T < T^*$ as long as
\begin{equation*}
\int_0^T \| j \|^2_{B^1_{\infty, 1}} \dt < + \infty.
\end{equation*}
\end{rmk}

\subsection{A continuation criterion based on the Els\"asser variables} \label{ss:cont-els}
As we have explained in the introduction, the magnetic field is not the only quantity which solves a linear equation: the Els\"asser variables also do so.
This means that we are able to find a continuation criterion based only on either $\al = u+ b$ or $\bt = u-b$.

\begin{thm}\label{t:contElsUplusB}
Let $(u_0, b_0) \in L^2(\R^d) \cap B^1_{\infty, 1}(\R^d)$ be a set of divergence-free initial data. Consider $T > 0$ such that the ideal MHD system, supplemented with
those initial data, has a unique solution $(u, b)$ in the space $C^0\big([0,T[\,;L^2(\R^d)\,\cap\,B^1_{\infty, 1}(\R^d)\big)$. 

Then, if we denote $\omega=\curl(u)$ and $j=\curl(b)$, one has
\begin{equation}\label{eq:omega+-j}
\int_0^T \big\| \, \omega + j \, \big\|_{B^0_{\infty, 1}} \dt < + \infty\qquad \Longleftrightarrow \qquad %\mbox{ or, equivalently, }\qquad 
\int_0^T \big\| \, \omega - j \, \big\|_{B^0_{\infty, 1}} \dt < + \infty\,.
\end{equation}
In addition, in the case those integrals are finite, the solution $(u,b)$ may be continued beyond $T$ into a solution belonging to the same regularity class.
%Then, this solution may be continued beyond $T$ into a solution belonging to the same regularity class if and only if
%\begin{equation}\label{eq:omega+-j}
%\int_0^T \big\| \, \omega + j \, \big\|_{B^0_{\infty, 1}} \dt < + \infty\qquad \mbox{ or, equivalently, }\qquad 
%\int_0^T \big\| \, \omega - j \, \big\|_{B^0_{\infty, 1}} \dt < + \infty.
%%\min\left\{\int_0^T \big\| \, \omega + j \, \big\|_{B^0_{\infty, 1}} \dt\,,\,\int_0^T \big\| \, \omega - j \, \big\|_{B^0_{\infty, 1}} \dt\right\}\,<\,+\infty.
%\end{equation}
\end{thm}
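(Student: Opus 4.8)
The plan is to exploit the fact that the Els\"asser variables $\al = u + b$ and $\bt = u - b$ satisfy the transport system \eqref{eq:els}, in which $\al$ is transported by $\bt$ and vice versa, each equation being \emph{linear} in the transported unknown. Applying the $\curl$ operator to \eqref{eq:els} yields evolution equations for the Els\"asser vorticities $\omega + j = \curl(\al)$ and $\omega - j = \curl(\bt)$; in dimension $d\geq 3$ these are matrix-valued, but the structure is the transport operator plus a stretching term. The key observation is that the forcing in the equation for $\curl(\al)$ involves only derivatives of $\bt$ paired with $\al$ (and conversely), so that a control of one Els\"asser vorticity in $L^1_T(B^0_{\infty,1})$ feeds into the transport estimate for the other. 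This is what will produce the equivalence \eqref{eq:omega+-j}.

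First I would establish the a priori energy bound \eqref{est:energy}, which controls $\|u\|_{L^2}+\|b\|_{L^2}$, hence also $\|\al\|_{L^2}+\|\bt\|_{L^2}$, uniformly on $[0,T[$. This is needed to recover $\al,\bt$ from their vorticities via the Biot--Savart law: since $\al,\bt\in L^2$, one has $\|\nabla\al\|_{B^0_{\infty,1}}\lesssim \|\al\|_{L^2}+\|\curl(\al)\|_{B^0_{\infty,1}}$ by separating the low frequency $\Delta_{-1}$ block (bounded by the $L^2$ norm through Bernstein's inequality, Lemma \ref{l:bern}) from the high frequencies (where $\nabla$ and $\curl$ are comparable, again by Bernstein). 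Thus assuming, say, $\|\omega+j\|_{L^1_T(B^0_{\infty,1})}<+\infty$ gives a control on $\|\nabla\al\|_{L^1_T(B^0_{\infty,1})}$, and in particular on $\|\nabla\al\|_{L^1_T(L^\infty)}$ through the embedding $B^0_{\infty,1}\hookrightarrow L^\infty$.

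Next I would write the transport equation for $\curl(\bt)$, which is advected by $\al$ with a forcing built from $\nabla\al$ and $\bt$. Since $\al$ is divergence-free and $\curl(\bt)$ solves a transport equation at regularity $s=0$, the sharpened estimate of Theorem \ref{th:AnnInnLinTV} applies: it is \emph{linear} in $\|\nabla\al\|_{L^1_T(L^\infty)}$, which is precisely the quantity we just controlled. Bounding the forcing term in $L^1_T(B^0_{\infty,1})$ by $\|\nabla\al\|_{L^1_T(B^0_{\infty,1})}\,\|\bt\|_{L^\infty_T(\cdots)}$ (using Proposition \ref{p:op}, noting that the product is controlled because one factor carries positive-order smoothness coming from the Biot--Savart recovery), and invoking Gr\"onwall's lemma, yields $\|\curl(\bt)\|_{L^1_T(B^0_{\infty,1})}<+\infty$. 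By symmetry of the roles of $\al$ and $\bt$ in \eqref{eq:els}, the reverse implication is identical, which gives \eqref{eq:omega+-j}.

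Finally, once both Els\"asser vorticities are in $L^1_T(B^0_{\infty,1})$, the Biot--Savart recovery above gives $\|\nabla\al\|_{L^1_T(L^\infty)}+\|\nabla\bt\|_{L^1_T(L^\infty)}<+\infty$, hence $\|\nabla u\|_{L^1_T(L^\infty)}+\|\nabla b\|_{L^1_T(L^\infty)}<+\infty$; this is exactly condition \eqref{eq:cont-Lip} of Theorem \ref{t_i:WP}, so the solution continues beyond $T$. The main obstacle I anticipate is controlling the nonlinear forcing term in the vorticity equation at the critical index $s=0$: the space $B^0_{\infty,1}$ is not an algebra, so the product estimate must be set up so that the bilinear term genuinely benefits from the extra derivative recovered by Biot--Savart on one factor (turning $\curl(\bt)\in B^0_{\infty,1}$ into $\nabla\bt\in B^0_{\infty,1}$, and then into a gain for the product via the paraproduct and remainder bounds of Proposition \ref{p:op}), rather than attempting a forbidden $B^0_{\infty,1}\times B^0_{\infty,1}$ multiplication. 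A secondary care is that the stretching term in dimension $d\geq 3$ must be handled, but it has the same bilinear structure and is controlled identically.
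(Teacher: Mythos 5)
Your outer reductions coincide with the paper's and are sound: the energy bound, the Biot--Savart recovery $\|\nabla\al\|_{B^0_{\infty,1}}\lesssim\|\al\|_{L^2}+\|\curl(\al)\|_{B^0_{\infty,1}}$ (this is exactly \eqref{est:low-high-f}), the symmetry in $\al\leftrightarrow\bt$, and the final appeal to the Lipschitz continuation criterion. The genuine gap is in your central step, where you pass to the equation for $\curl(\bt)$ and must bound the stretching term in $L^1_T(B^0_{\infty,1})$. In the Els\"asser system the transporting field is \emph{not} the transported one, so this term does not vanish even in $d=2$: one has
\begin{equation*}
\curl\big((\al\cdot\nabla)\bt\big)\,=\,(\al\cdot\nabla)\,\curl(\bt)\,+\,(\partial_1\al\cdot\nabla)\bt_2\,-\,(\partial_2\al\cdot\nabla)\bt_1\,,
\end{equation*}
with a matrix analogue for $d\geq3$. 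The forcing is thus a \emph{plain} bilinear expression in $(\nabla\al,\nabla\bt)$, and under the hypotheses both factors have exactly order-zero regularity $B^0_{\infty,1}$, no more. The two paraproducts are indeed harmless, but the remainder $\mc R(\nabla\al,\nabla\bt)$ falls in the borderline case $s_1+s_2=0$ of Proposition \ref{p:op}, which only lands in $B^0_{\infty,\infty}$, not in $B^0_{\infty,1}$; this loss is genuine (a logarithmic divergence in the frequency sum), not a technical artifact. Your proposed fix does not repair it: the ``extra derivative recovered by Biot--Savart'' is not a gain of smoothness, it merely converts $\curl(\bt)\in B^0_{\infty,1}$ into $\nabla\bt\in B^0_{\infty,1}$, still of order zero, so no factor ever carries the strictly positive regularity that the remainder estimate requires. (Contrast this with the Euler estimate of Lemma \ref{l:order2forVelovity}, where the analogous forcing $\partial_k v\cdot\nabla\Omega$ is admissible precisely because there $v\in B^2_{\infty,1}$, so one factor lies in $B^1_{\infty,1}$.) Consequently the source term cannot be placed in $L^1_T(B^0_{\infty,1})$, which is what Theorem \ref{th:AnnInnLinTV} needs, and your Gr\"onwall loop cannot be closed.

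This obstruction is exactly why the paper avoids the vorticity formulation here (a choice it announces in the introduction). It keeps the equation for $\bt$ itself, at regularity $s=1$, eliminating the pressure with the Leray projector: since $\P\bt=\bt$, the Els\"asser system \eqref{eq:els} yields $\partial_t\bt+(\al\cdot\nabla)\bt=\big[\al\cdot\nabla,\P\big]\bt$, and after applying $\Delta_j$ the right-hand side consists of the two commutators $\Delta_j\big[\al\cdot\nabla,\P\big]\bt$ and $\big[\al\cdot\nabla,\Delta_j\big]\bt$, bounded by Lemma \ref{l:prodEstimates} and Lemma \ref{l:CommBCD} (with $s=r=1$). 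The cancellations inside these commutators are what substitute for the missing product law at critical regularity, giving the bilinear bound $\|\al\|_{B^1_{\infty,1}}\|\bt\|_{B^1_{\infty,1}}$; then $\|\al\|_{B^1_{\infty,1}}\lesssim\|(u_0,b_0)\|_{L^2}+\|\omega+j\|_{B^0_{\infty,1}}$ and Gr\"onwall give $\bt\in L^\infty_T(B^1_{\infty,1})$, hence the finiteness of $\int_0^T\|\omega-j\|_{B^0_{\infty,1}}\dt$ and the continuation. If you wish to salvage a vorticity-level argument, you must first exhibit some commutator or divergence structure in the stretching term; as written, that step fails.
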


%\begin{rmk} \label{r:pseudo}
%The fact that, in the previous criterion, we have the ``freedom'' to look either at the quantity $\omega+j$ or at $\omega-j$ is not really surprising, as the magnetic field is a pseudovector:
%the equations remain unchanged when substituting $-b$ to $b$.
%\end{rmk}

\begin{proof}
We already know, from Proposition 5.7 in \cite{CF3}, that the solution may be continued beyond the time $T$ if and only if
\begin{equation} \label{est:cc-Els}
\int_0^T \Big\{ \| \nabla(u + b) \|_{L^\infty} + \| \nabla(u - b) \|_{L^\infty} \Big\} \dt < +\infty.
\end{equation}
Throughout this proof, we assume that
\begin{equation}\label{eq:contElsUplusB}
\int_0^T \big\| \, \omega + j \, \big\|_{B^0_{\infty, 1}} \dt < + \infty.
%\min\left\{\int_0^T \big\| \, \omega + j \, \big\|_{B^0_{\infty, 1}} \dt\,,\,\int_0^T \big\| \, \omega - j \, \big\|_{B^0_{\infty, 1}} \dt\right\}\,<\,+\infty.
\end{equation}
We are going to show that this condition is enough to ensure that also
\begin{equation} \label{eq:contU-B}
\int_0^T \big\| \, \omega - j \, \big\|_{B^0_{\infty, 1}} \dt < + \infty\,,
\end{equation}
and that the finiteness of both integrals implies \eqref{est:cc-Els}.

The exact same argument will apply also when considering the quantity $\omega-j$, whence the claimed equivalence.

\medbreak
We start the proof by remarking that, splitting into low and high frequencies as done in \eqref{est:Du} and using the Biot-Savart law
\[
f_k\,=\,(-\Delta)^{-1}\sum_{j=1}^d\d_j\big[\curl (f)\big]_{jk}\,,
\]
it is easy to show that, for any divergence-free vector field $f$, one has
\begin{equation} \label{est:low-high-f}
\left\|\nabla f\right\|_{L^\infty}\,\leq\,\|f\|_{B^{1}_{\infty,1}}\,\lesssim\,\|f\|_{L^2}\,+\,\|\curl (f)\|_{B^{0}_{\infty,1}}\,.
\end{equation} 
In the above equations, we have denoted $\curl (f)$ the matrix such that $\big[\curl (f)\big]_{jk}\,=\,\d_jf_k\,-\,\d_kf_j$, with the usual identification $\curl (f) = \nabla\times f$ in dimension $d=3$, and
$\curl (f)\,=\,\d_1f_2-\d_2f_1$ in dimension $d=2$. Thus, after noticing that
\[
\left\|\al\right\|_{L^2}\,\leq\,\left\|\big(u,b\big)\right\|_{L^2}\,\leq\,\left\|\big(u_0,b_0\big)\right\|_{L^2}
\]
in view of the energy inequality \eqref{est:energy}, we get
\begin{equation} \label{est:low-high}
\| \nabla (u + b) \|_{L^\infty}\,\leq\,\| u + b \|_{B^1_{\infty,1}}\,\lesssim\,\left\|\big(u_0,b_0\big)\right\|_{L^2}\,+\,\left\|\omega+j\right\|_{B^0_{\infty,1}}\,.
\end{equation} 
So, it remains us to show that the integral in \eqref{eq:contU-B} remains finite under condition \eqref{eq:contElsUplusB}. For this, we note that $\bt = u - b$ solves the
\emph{linear} equation
\begin{equation} \label{eq:beta}
\partial_t \bt + (\al \cdot \nabla) \bt = \big[ \al \cdot \nabla, \P \big] \bt\,,
\end{equation}
where $\P$ is the Leray projector onto the space of divergence-free vector fields, as introduced in Subsection \ref{ss:Leray}.
%and $[A,B]\,:=\,AB-BA$ denotes the commutator between two operators $A$ and $B$.

By applying the dyadic block $\Delta_j$, for $j \geq -1$, to equation \eqref{eq:beta}, we obtain
\begin{equation}\label{eq:deltaJEstimate}
\Big( \partial_t + \al \cdot \nabla \Big) \Delta_j \bt = \Delta_j \big[ \al \cdot \nabla, \P \big] \bt + \big[ \al \cdot \nabla, \Delta_j \big] \bt.
\end{equation}
The second commutator in the right-hand side of this equation can be estimated with the help of Lemma \ref{l:CommBCD}, while we can resort to Lemma \ref{l:prodEstimates} for bounding
the first one.
Then, we produce $L^\infty$ estimates for the dyadic blocks $\Delta_j \bt$ appearing in the transport equation \eqref{eq:deltaJEstimate}: we have
\begin{equation*}
2^j \| \Delta_j \bt \|_{L^\infty} \lesssim 2^j \| \Delta_j \bt_0 \|_{L^\infty} + \int_0^T c_j(t) \| \al \|_{B^1_{\infty, 1}} \| \bt \|_{B^1_{\infty, 1}} \dt\,,
\end{equation*}
for suitable sequences $\big(c_j(t)\big)_{j\geq -1}$ belonging to the unit sphere of $\ell^1$.
By summing the previous inequality over all $j \geq 1$, we get
\begin{equation*}
\| \bt \|_{{L^\infty_T}(B^1_{\infty, 1})} \lesssim \| \bt_0 \|_{B^1_{\infty, 1}} + \int_0^T \| \al \|_{B^1_{\infty, 1}} \| \bt \|_{B^1_{\infty, 1}} \dt.
\end{equation*}
%We are near the end of the proof: we just have to estimate the $B^1_{\infty, 1}$ norm of $\al = u + b$ by the quantities appearing in \eqref{eq:contElsUplusB}.
%For this, we separate low and high frequencies so as to obtain, by using the Bernstein inequalities and the Biot-Savart law,
%\begin{equation*}
%\begin{split}
%\| \al \|_{B^1_{\infty, 1}} & = \frac{1}{2} \| \Delta_{-1} \al \|_{L^\infty} + \sum_{m \geq 0} 2^m \| \Delta_m \al \|_{B^1_{\infty, 1}} \\
%& \lesssim \| \al \|_{L^2} + \sum_{m \geq 0} \left\| \Delta_m \nabla (- \Delta)^{-1} \sum_{k} \partial_k \big[ \curl (\al) \big]_{k, l} \right\|_{L^\infty} \\
%& \lesssim \| \al_0 \|_{L^2} + \| \, \omega + j \, \|_{B^0_{\infty, 1}}.
%\end{split}
%\end{equation*}
At this point, we estimate the $B^1_{\infty, 1}$ norm of $\al = u + b$ by using \eqref{est:low-high}, and
we finally infer an integral inequality which is linear with respect to $\| \bt \|_{B^1_{\infty, 1}}$:
\begin{equation*}
\| \bt \|_{L^\infty_T (B^1_{\infty, 1})} \lesssim \| \bt_0 \|_{B^1_{\infty,1}} + \int_0^T \Big( \| \al_0 \|_{L^2} + \| \, \omega + j \, \|_{B^0_{\infty, 1}} \Big) \| \bt \|_{B^1_{\infty, 1}} \dt\,.
\end{equation*}
Thus, we may use Gr\"onwall's lemma to end the proof.
\end{proof}

\begin{rmk}
Because of the presence of the pressure terms in the equations and of the fact that we work in a critical regularity framework,
we are unable to obtain a continuation criterion depending only on the $L^\infty$ norm of $\nabla (u\pm b)$.

However, in a subcritical regularity framework $B^s_{\infty,r}$ with $s>1$, we believe that the same method of \cite{B-K-M} applies to give a continuation criterion
in terms of the finiteness of the norm $\|\curl(u\pm b)\|_{L^1_T(L^\infty)}$. We do not treat the extension of our result in this direction here.
\end{rmk}

\section{Improved lifespan for planar flows} \label{s:lifespan}

In this section, we present a refinement of Theorem \ref{t_i:lifespan} of the introduction. As in that result, we exhibit a lower bound for the lifespan of the solutions in space dimension
$d=2$, which implies that the lifespan tends to $+\infty$ when the size of the magnetic field tends to $0$. The point is that we require higher regularity on the initial velocity field only.

\subsection{Statement of the result} \label{ss:statement}

We present here the precise statement concerning the improved lower bound for the lifespan of the solutions in two space dimensions.

\begin{thm}\label{t:lifespan}
Let $(u_0, b_0)\,\in\,L^2(\R^2)$ be a set of divergence-free initial data such that $u_0 \in B^2_{\infty, 1}(\R^2)$ and $b_0\in B^1_{\infty, 1}(\R^2)$.

Then, the lifespan $T>0$ of the corresponding solution $(u, b)\in C\big([0,T[\,;L^2(\R^2)\cap B^1_{\infty,1}(\R^2)\big)$ of the $2$-D ideal MHD system \eqref{ieq:mhd}, given by Theorem \ref{t_i:WP},
enjoys the following lower bound:
\begin{equation}\label{eq:TstarInequality}
T \geq \frac{C}{\| u_0 \|_{L^2 \cap B^2_{\infty, 1}}} \log \left\{ 1 + C\,\log \left[ 1 + C\,\log \left( 1 + C \frac{\| u_0 \|_{L^2 \cap B^2_{\infty, 1}} }{\| b_0 \|_{B^1_{\infty, 1}}} \right) \right] \right\},
\end{equation}
where $C > 0$ is a constant independent of the initial data.
\end{thm}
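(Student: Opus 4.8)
The plan is to compare the MHD flow directly with the solution $v$ of the $2$-D incompressible Euler equations issued from the same velocity datum $u_0$. Since $u_0\in L^2\cap B^2_{\infty,1}(\R^2)$, classical $2$-D Euler theory provides a \emph{global} solution $v\in C\big(\R^+;L^2\cap B^2_{\infty,1}\big)$ with $\div(v)=0$. I would first establish the only external input on $v$, namely an at-most-double-exponential growth bound for its top-order norm: using the conservation of $\|v(t)\|_{L^2}=\|u_0\|_{L^2}$ and of $\|\curl(v)(t)\|_{L^\infty}$, the transport estimate for $\curl(v)$ in $B^1_{\infty,1}$, and the logarithmic interpolation inequality $\|\curl(v)\|_{B^0_{\infty,1}}\lesssim \|\curl(v)\|_{L^\infty}\log\big(e+\|v\|_{B^2_{\infty,1}}\big)$, a Grönwall argument yields a bound of the form $\log\log\big(e+\|v(t)\|_{B^2_{\infty,1}}\big)\leq \log\log\big(e+\|u_0\|_{B^2_{\infty,1}}\big)+C\|u_0\|_{L^2\cap B^2_{\infty,1}}\,t$.

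Next, I would introduce $\delta\al:=u+b-v$ and $\delta\bt:=u-b-v$, both divergence-free and of finite energy, with $\delta\al|_{t=0}=b_0$ and $\delta\bt|_{t=0}=-b_0$. Subtracting the Euler equation from the Els\"asser equations \eqref{eq:els} and reorganizing the convective terms via $\bt-v=\delta\bt$ and $\al-v=\delta\al$, one finds
\begin{equation*}
\partial_t\delta\al+(\bt\cdot\nabla)\delta\al+\nabla\pi=-(\delta\bt\cdot\nabla)v,\qquad
\partial_t\delta\bt+(\al\cdot\nabla)\delta\bt+\nabla\widetilde\pi=-(\delta\al\cdot\nabla)v,
\end{equation*}
so that the forcing terms feature \emph{only} derivatives of the known function $v$. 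This is the point where $v\in B^2_{\infty,1}$ will be used and $b_0\in B^1_{\infty,1}$ will suffice. Applying the Leray projector $\P$ (legitimate in the finite-energy setting) removes the pressure gradients and produces transport equations of the type $\partial_t\delta\al+(\bt\cdot\nabla)\delta\al=[\bt\cdot\nabla,\P]\delta\al-\P(\delta\bt\cdot\nabla)v$.

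I would then run $B^1_{\infty,1}$ estimates on the projected system, bounding the commutator by Lemma \ref{l:prodEstimates}, $\|[\bt\cdot\nabla,\P]\delta\al\|_{B^1_{\infty,1}}\lesssim\|\bt\|_{B^1_{\infty,1}}\|\delta\al\|_{B^1_{\infty,1}}$, and the forcing by Corollary \ref{c:prodEstimates2}, $\|\P(\delta\bt\cdot\nabla)v\|_{B^1_{\infty,1}}\lesssim\|\delta\bt\|_{B^1_{\infty,1}}\|v\|_{B^2_{\infty,1}}$. Writing $\al=v+\delta\al$, $\bt=v+\delta\bt$ and setting $E(t):=\|\delta\al(t)\|_{B^1_{\infty,1}}+\|\delta\bt(t)\|_{B^1_{\infty,1}}$, Theorem \ref{th:transport} together with Gr\"onwall's lemma gives, as long as $E$ stays bounded, an inequality of the form $E(t)\lesssim \|b_0\|_{B^1_{\infty,1}}\exp\big(C\int_0^t(\|v\|_{B^2_{\infty,1}}+E)\big)$. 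A bootstrap (continuity) argument lets one absorb the quadratic contribution of $E$ and reduce this to $E(t)\lesssim \|b_0\|_{B^1_{\infty,1}}\exp\big(C\int_0^t\|v\|_{B^2_{\infty,1}}\big)$ on the interval where the right-hand side stays $\leq 1$.

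Finally, I would convert this into a lifespan bound. Since $\|\nabla u\|_{L^\infty}+\|\nabla b\|_{L^\infty}\lesssim E+\|\nabla v\|_{L^\infty}$ with $v$ global, the continuation criterion \eqref{est:cc-Els} from Theorem \ref{t:contElsUplusB} shows that the solution survives as long as $E$ remains bounded; hence the solution exists at least up to the first time $T$ at which $\int_0^T\|v\|_{B^2_{\infty,1}}\,\dt\simeq \log\big(1/\|b_0\|_{B^1_{\infty,1}}\big)$. Inserting the double-exponential bound for $\|v\|_{B^2_{\infty,1}}$ and inverting the resulting relation between $T$ and $\log(1/\|b_0\|)$ produces precisely the triple logarithm in \eqref{eq:TstarInequality}. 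I expect this last quantitative step to be the main obstacle: one must keep simultaneous track of the double-exponential growth of the Euler norm, the bootstrap threshold on $E$, and the continuation criterion, so that the constants align and the nested-logarithm lower bound comes out with the correct dependence on $\|u_0\|_{L^2\cap B^2_{\infty,1}}$ and $\|b_0\|_{B^1_{\infty,1}}$.
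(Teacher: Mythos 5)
Your proposal follows essentially the same route as the paper's proof: comparison with the global $2$-D Euler solution $v$ issued from $u_0$, introduction of the difference Els\"asser variables $\delta\al = u+b-v$ and $\delta\bt = u-b-v$ (whose forcing terms only involve $\nabla v$), Leray-projected $B^1_{\infty,1}$ estimates based on Lemmas \ref{l:CommBCD} and \ref{l:prodEstimates} and Corollary \ref{c:prodEstimates2}, a Gr\"onwall-plus-bootstrap argument to absorb the quadratic term in $E$, and finally inversion of a double-exponential bound on $\|v\|_{B^2_{\infty,1}}$ to produce the triple logarithm. The only cosmetic differences are in how the Euler growth bound is obtained (you use conservation of $\|\curl(v)\|_{L^\infty}$ plus logarithmic interpolation, while the paper's Lemma \ref{l:order2forVelovity} differentiates the vorticity equation and applies the linear transport estimate of Theorem \ref{th:AnnInnLinTV} twice) and in the exact bookkeeping of the bootstrap threshold, neither of which changes the substance of the argument.
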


Before proving the previous statement, a couple of remarks are in order.

\begin{rmk}
Our result is stated in two dimensions of space. This is crucial, as the proof relies on the existence of global solutions to the Euler system.
However, this is the only point in our argument that is specific to $d = 2$, and our proof may be adapted to all dimensions $d \geq 2$ to show the following fact:
if we denote by $T_E > 0$ the lifespan of the unique $B^2_{\infty, 1}$ solution $v$ of the Euler problem with initial datum $v_{|t=0} = u_0$, and if we set
$\| b_0 \|_{B^1_{\infty, 1}} = \veps$, then the lifespan $T_\veps$ of the solution $(u, b)$ to the ideal MHD system \eqref{ieq:mhd} satisfies
\begin{equation*}
T_\veps\, \tend\, T_E \qquad\qquad \mbox{ as } \quad \veps\, \rightarrow\, 0^+\,.
\end{equation*}
\end{rmk}

\begin{rmk}
For proving Theorem \ref{t:lifespan}, we use the fact that $(u, b)$ is a finite energy solution only to recast the ideal MHD system into the Els\"asser variables.
However, at the quantitative level, the magnetic energy $\| b_0 \|_{L^2}$ plays absolutely no role in the computations.

Thus, our approach, and the result, can be adapted to other situations where the solutions have infinite energy, but one can use the equivalence of the original ideal MHD system with 
its projected counterpart (\tsl{i.e.} the system obtained after application of the Leray projector $\P$ to the equations) and with the Els\"asser formulation.
We refer to \cite{Cobb} for more details on that topic.
\end{rmk}

\subsection{Proof of the improved lower bound for the lifespan} \label{ss:proof}

This subsection is devoted to the proof of Theorem \ref{t:lifespan}. The main idea of our method is to compare the ideal MHD system to the classical homogeneous Euler system, which is known to be
globally well-posed in $\R^2$ in our functional framework.

We divide the proof into three steps.

\paragraph{Step 1: solving the Euler equations.}
To begin with, we solve the incompressible Euler equations with initial datum $u_0$. More precisely, let
$v : \mathbb{R} \times \mathbb{R}^2 \tend \mathbb{R}^2$ be the unique global solution of the initial value problem
\begin{equation}\label{eq:Euler}
\begin{cases}
\partial_t v + (v \cdot \nabla) v + \nabla p = 0 \\[1ex]
\D(v) = 0 \\[1ex]
v_{|t=0} = u_0
\end{cases}
%\qquad \text{with } v(0, x) = u_0(x),
\end{equation}
which lies in the class $C^0\big(\R_+;L^2(\R^2) \cap B^2_{\infty, 1}(\R^2)\big)$. We refer \tsl{e.g.} to Chapter 7 of \cite{BCD} for details.

In the rest of the proof, we need the following lemma. Even though the estimate contained therein is well-known, we were not able to find a precise reference for it. Therefore,
we also provide a full proof.

\begin{lemma}\label{l:order2forVelovity}
Set $V_0 = \| u_0 \|_{L^2 \cap B^2_{\infty, 1}}$. Then, for all $T > 0$, we have the following inequality:
\begin{equation*}
\sup_{t\in[0,T]}\left\|v(t)\right\|_{L^2\cap B^2_{\infty,1}}\, \leq\, C\, V_0 \,\exp \left(C\, T\, V_0\, e^{C\, T\, V_0} \right)\,,
\end{equation*}
for some numerical constant $ C > 0$, independent of $u_0$.
\end{lemma}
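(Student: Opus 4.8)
The plan is to reduce everything to the scalar vorticity $\omega=\curl(v)$, which in two dimensions is merely transported by the flow, $\partial_t\omega+v\cdot\nabla\omega=0$, and then to run two nested Grönwall arguments. Throughout I will use the conservation of kinetic energy for the Euler system, $\|v(t)\|_{L^2}=\|u_0\|_{L^2}\le V_0$, and the Biot--Savart estimates, obtained exactly as in \eqref{est:low-high-f} (bounding $\Delta_{-1}v$ by $\|v\|_{L^2}$ through Bernstein, and the high frequencies through Lemma \ref{l:homMult} applied to the degree-$(-1)$ symbol of $\nabla^\perp(-\Delta)^{-1}$),
\begin{equation*}
\|v\|_{B^1_{\infty,1}}\,\lesssim\,\|v\|_{L^2}+\|\omega\|_{B^0_{\infty,1}}\,,\qquad\qquad
\|v\|_{B^2_{\infty,1}}\,\lesssim\,\|v\|_{L^2}+\|\omega\|_{B^1_{\infty,1}}\,.
\end{equation*}
These reduce the whole lemma to controlling the $B^0_{\infty,1}$ and $B^1_{\infty,1}$ norms of $\omega$.

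First I would bound $\|\omega\|_{B^0_{\infty,1}}$. The crucial point is \emph{not} to use an exponential transport estimate here, but rather the Vishik--Hmidi--Keraani linear estimate of Theorem \ref{th:AnnInnLinTV} (with vanishing source, since the vorticity equation is forcing-free), which gives
\begin{equation*}
\|\omega\|_{L^\infty_t(B^0_{\infty,1})}\,\lesssim\,\|\omega_0\|_{B^0_{\infty,1}}\Big(1+\int_0^t\|\nabla v(\tau)\|_{L^\infty}\,{\rm d}\tau\Big)\,.
\end{equation*}
Since $\|\nabla v\|_{L^\infty}\le\|v\|_{B^1_{\infty,1}}\lesssim V_0+\|\omega\|_{B^0_{\infty,1}}$ and $\|\omega_0\|_{B^0_{\infty,1}}\lesssim\|u_0\|_{B^1_{\infty,1}}\le V_0$, setting $I(t)=\int_0^t\|\nabla v(\tau)\|_{L^\infty}\,{\rm d}\tau$ yields a \emph{linear} differential inequality $I'(t)\lesssim V_0\big(1+I(t)\big)$. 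Grönwall then produces a \emph{single} exponential, $1+I(t)\lesssim e^{CV_0t}$, whence $\|\omega(t)\|_{B^0_{\infty,1}}\lesssim V_0\,e^{CV_0t}$ and, bounding the integral crudely, $\int_0^T\|\nabla v(\tau)\|_{B^0_{\infty,1}}\,{\rm d}\tau\lesssim\int_0^T\|v(\tau)\|_{B^1_{\infty,1}}\,{\rm d}\tau\lesssim T\,V_0\,e^{CV_0T}$.

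Next I would upgrade to the $B^1_{\infty,1}$ norm. Applying the standard transport estimate of Theorem \ref{th:transport} at regularity $(s,r)=(1,1)$, $p=+\infty$, to the pure transport equation for $\omega$ (again $g=0$), the exponential rate is exactly $\|\nabla v\|_{B^0_{\infty,1}}$, so the previous step gives
\begin{equation*}
\|\omega\|_{L^\infty_T(B^1_{\infty,1})}\,\le\,\|\omega_0\|_{B^1_{\infty,1}}\,\exp\Big(C\!\int_0^T\|\nabla v(\tau)\|_{B^0_{\infty,1}}\,{\rm d}\tau\Big)\,\lesssim\,V_0\,\exp\big(C\,T\,V_0\,e^{CTV_0}\big)\,.
\end{equation*}
Combining this with the second Biot--Savart inequality, with $\|v(t)\|_{L^2}\le V_0$, and with $\|\omega_0\|_{B^1_{\infty,1}}\lesssim\|u_0\|_{B^2_{\infty,1}}\le V_0$, delivers the claimed double-exponential bound on $\sup_{[0,T]}\|v\|_{L^2\cap B^2_{\infty,1}}$.

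The genuinely delicate point is the first step: a naive application of an exponential transport estimate at the $B^0$ level would feed $\|\omega\|_{B^0_{\infty,1}}$ back into its own exponential rate and generate a spurious third exponential, so it is essential to use the linear estimate of Theorem \ref{th:AnnInnLinTV} there, which is precisely what keeps the final bound at two exponentials. The remaining steps are routine bookkeeping; the only minor subtlety is to bound $\int_0^T e^{CV_0\tau}\,{\rm d}\tau$ by the crude $T\,e^{CV_0T}$ (rather than the sharper $(CV_0)^{-1}e^{CV_0T}$), in order to match the exact form of the stated inequality.
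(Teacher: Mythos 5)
Your proof is correct, and its first half is exactly the paper's: both arguments use conservation of energy, pass to the transported vorticity, and obtain the first-order bound $\|v(t)\|_{B^1_{\infty,1}}\lesssim V_0\,e^{CV_0t}$ by feeding the Biot--Savart inequality \eqref{est:low-high-f} into the linear estimate of Theorem \ref{th:AnnInnLinTV} and applying Gr\"onwall's lemma. Where you genuinely part ways is the second-order step. The paper differentiates the vorticity equation, so that $\partial_k\Omega$ is transported with forcing $-\,\partial_k v\cdot\nabla\Omega$, applies Theorem \ref{th:AnnInnLinTV} a second time, controls the forcing by a Bony product law in which one factor must be measured only through the already-known first-order norms (otherwise the bound becomes quadratic in the unknown $\|v\|_{B^2_{\infty,1}}$ and the Gr\"onwall structure breaks), and then runs a second Gr\"onwall argument. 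You instead keep the vorticity equation undifferentiated and apply the standard exponential transport estimate, Theorem \ref{th:transport}, at regularity $(s,r)=(1,1)$, $p=+\infty$: its growth factor is exactly $\exp\big(C\int_0^T\|\nabla v\|_{B^0_{\infty,1}}\,{\rm d}t\big)$, a purely first-order quantity that your Step 1 bounds by $\exp\big(CTV_0e^{CTV_0}\big)$; the technical hypotheses of that theorem are trivially met since $v\in C^0\big(\R_+;L^2\cap B^2_{\infty,1}\big)$. Your route is the more economical of the two: no differentiation of the equation, no paraproduct estimates, no second Gr\"onwall, and it sidesteps the one delicate point of the paper's argument (the careful pairing of factors inside the product law). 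What the paper's variant buys in exchange is a scheme that tolerates genuine source terms and stays entirely within the $B^0_{\infty,1}$ linear framework --- precisely the template it reuses for the MHD estimates of Section \ref{s:lifespan}, where forcing terms are unavoidable. One cosmetic point: you announce ``two nested Gr\"onwall arguments'', but your second step in fact needs none, which is exactly its advantage.
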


\begin{proof}
Since the following energy conservation holds true, namely
\begin{equation} \label{est:Euler-en}
\forall\,t\geq0\,,\qquad\qquad \|v(t)\|_{L^2}\,\leq\,\|v_0\|_{L^2}\,,
\end{equation}
%, for any $t\geq 0$, one has $\|v(t)\|_{L^2}\,\leq\,\|v_0\|_{L^2}$,
we only have to bound the Besov norm of $v$.
For this, we resort to the vorticity form of the Euler equations. Define the vorticity $\Omega = \partial_1 v_2 - \partial_2 v_1$ of the flow $v$, which can be recovered
from $\Omega$ by the $2$-D Biot-Savart law $v = - \nabla^\perp (- \Delta)^{-1} \Omega$. Then, $\Omega$ solves the pure transport equation
\begin{equation}\label{eq:vorticity}
\partial_t \Omega + v \cdot \nabla \Omega = 0\,,\qquad\qquad\mbox{ with }\qquad \Omega_{|t=0} = \Omega_0 := \d_1v_{0,2} - \d_2v_{0,1}.
\end{equation}

First of all, we find $B^0_{\infty, 1}$ bounds for $\Omega$, by using the linear transport estimates of Theorem \ref{th:AnnInnLinTV}: we get, for any $T>0$, the bound
\begin{equation}\label{eq:vorticityInequality}
\| \Omega \|_{{L^\infty_T}(B^0_{\infty, 1})} \lesssim \| \Omega_0 \|_{B^0_{\infty, 1}} \left( 1 + \int_0^T \| \nabla v \|_{L^\infty} \dt \right).
\end{equation}
To control the norm of the gradient $\nabla v$ appearing in this estimate, we resort to the inequality exhibited in \eqref{est:low-high-f}:
%a separation of low and high frequencies. We write, with the first Bernstein inequality for the low frequencies, and the Biot-Savart law for the high ones,
%\begin{equation*}\label{eq:BesovInequality}
%\begin{split}
%\| v \|_{B^1_{\infty, 1}} & = \| \Delta_{-1} v \|_{L^\infty} + \sum_{m \geq 0} 2^j \| \Delta_m v \|_{L^\infty} \\
%& \lesssim \| v \|_{L^2} + \sum_{m \geq 0} \| \Delta_m \nabla \nabla^\perp (- \Delta)^{-1} \omega \|_{L^\infty} \lesssim \| v \|_{L^2} + \| \omega \|_{B^0_{\infty, 1}}.
%\end{split}
%\end{equation*}
by combining the latter with \eqref{est:Euler-en}, % from \eqref{eq:vorticityInequality} we obtain the integral inequality 
%\begin{equation*}
%\| \Omega \|_{B^0_{\infty, 1}} \lesssim \| \Omega_0 \|_{B^0_{\infty, 1}} \left( 1 + T \| u_0 \|_{L^2} + \int_0^T \| \Omega \|_{B^0_{\infty, 1}} \dt \right)\,,
%\end{equation*}
%on which we may use Gr\"onwall's lemma to get
and then using Gr\"onwall's lemma, from \eqref{eq:vorticityInequality} we obtain
\begin{equation*}
\| \Omega \|_{B^0_{\infty, 1}} \lesssim \| \Omega_0 \|_{B^0_{\infty, 1}} \Big( 1 + T \| u_0 \|_{L^2} \Big) \exp \left( c T \| \Omega_0 \|_{B^0_{\infty, 1}} \right)\,.
\end{equation*}
So, by adding the energy $\| v \|_{L^2}$ to both sides of this inequality, we obtain a first order estimate of the solution $v$, namely
\begin{equation*}
\begin{split}
\| v \|_{L^\infty_T (B^1_{\infty, 1})} & \lesssim \| u_0 \|_{L^2} + \| \Omega_0 \|_{B^0_{\infty, 1}} \Big( 1 + T \| u_0 \|_{L^2} \Big) \exp \left( c T \| \Omega_0 \|_{B^0_{\infty, 1}} \right) \\
& \lesssim V_0 (1 + T V_0) e^{cTV_0} \lesssim V_0 e^{cT V_0}\,.
\end{split}
\end{equation*}

Next, we differentiate the vorticity equation \eqref{eq:vorticity} to obtain a second order bound on $v$. For $k = 1, 2$, we get
\begin{equation*}
\partial_t \partial_k \Omega + u \cdot \nabla \partial_k \Omega = - \partial_k v \cdot \nabla \Omega\,.
\end{equation*}
Applying the linear estimate of Theorem \ref{th:AnnInnLinTV} one more time, we find
\begin{equation*}
\| \nabla \Omega \|_{{L^\infty_T}(B^0_{\infty, 1})} \lesssim \left\{ \| \nabla \Omega_0 \|_{B^0_{\infty, 1}} + \sum_{k=1,2 }
\int_0^T \| \partial_k v \cdot \nabla \Omega \|_{B^0_{\infty, 1}} \dt \right\} \left( 1 + \int_0^T \| \nabla v \|_{L^\infty} \dt \right).
\end{equation*}
The only difficulty in using the previous inequality is that the space $B^0_{\infty, 1}$ is not an algebra. Therefore we must be careful with the first integral,
which involves the second order derivative $\nabla \Omega$. However, using the Bony decomposition and Proposition \ref{p:op}, we find that the function product is a continuous map in
the $B^1_{\infty, 1} \times B^0_{\infty, 1} \tend B^0_{\infty, 1}$ topology. Therefore, by adding the energy $\| v \|_{L^2}$ to both sides of the previous inequality,
we find an integral estimate for the quantity $\phi(t)\,:=\,\left\|v\right\|_{L^\infty_t(L^2\cap B^2_{\infty,1})}$, namely
\begin{equation*}
\phi(T) \lesssim \left\{ V_0 + \int_0^T \phi(t) V_0 e^{ct V_0} \dt \right\} \left( 1 + \int_0^T V_0 e^{ct V_0} \dt \right) \lesssim V_0 e^{cT V_0} \left\{ 1 + \int_0^T \phi(t) \dt \right\}.
\end{equation*}
An application of Gr\"onwall's lemma to this last inequality ends the proof.
\end{proof}

\paragraph{Step 2: using the Els\"asser formulation.}
This having been established, we come back to the ideal MHD system \eqref{ieq:mhd}. Thanks to the finite energy condition on the solution $(u,b)$,
we can recast the equations in Els\"asser variables, as done in Subsection \ref{ss:els}, and compare the Els\"asser system \eqref{eq:els} to the homogeneous Euler equations \eqref{eq:Euler}.

In order to do so, we start by defining the difference functions
\begin{equation*}
\delta \al = \al - v = (u-v) + b \qquad \quad \mbox{ and } \qquad\quad \delta \bt = \bt - v = (u-v) - b\,.
\end{equation*}
By using equations \eqref{eq:els} and \eqref{eq:Euler}, we find that the couple $(\de\al,\de\bt)$ satisfies the system
\begin{equation}\label{eq:dadbSyst}
\begin{cases}
\partial_t (\delta \al) + ( \bt \cdot \nabla) \delta \al + (\delta \bt \cdot \nabla) v + \nabla (\delta \pi_1) = 0 \\
\partial_t (\delta \bt) + ( \al \cdot \nabla) \delta \bt + (\delta \al \cdot \nabla) v + \nabla (\delta \pi_2) = 0 \\
\D (\delta \al) = \D (\delta \bt) = 0\,,
\end{cases}
\end{equation}
where, for $k \in \{ 1, 2 \}$, we have set $\delta \pi_k = \pi_k - p$  as being the differences of the pressure functions.

The difference functions $\delta \al$ and $\delta \bt$ are very well suited for our purposes, as their initial values depend only on the initial magnetic field:
\begin{equation*}
\delta \al (0) = b_0 \qquad\quad \text{ and } \qquad\quad \delta \bt (0) = - b_0.
\end{equation*}
In addition, estimating $\delta \alpha$ and $\delta \bt$ will immediately provide control for the solution $(\al, \bt)$ of the Els\"asser system \eqref{eq:els}, thanks to
explicit bounds for the regular solution $v$.
With that in mind, we seek to estimate the $B^1_{\infty, 1}$ norms of $(\delta \al, \delta \bt)$. To do this, we see from \eqref{eq:dadbSyst} that we will need $B^2_{\infty, 1}$ bounds on $v$,
which are given in Lemma \ref{l:order2forVelovity} above.

Let $j \geq -1$. By applying the Leray projection operator $\P$, followed by the dyadic block $\Delta_j$, to the first two equations in \eqref{eq:dadbSyst}, we obtain
\begin{equation}\label{eq:dyadicDaDb}
\begin{cases}
\partial_t \Delta_j (\delta \al) + (\bt \cdot \nabla) \Delta_j (\delta \al) + \Delta_j \P (\delta \bt \cdot \nabla) v =
\Delta_j \big[ \bt \cdot \nabla, \P \big] \delta \al + \big[ \bt \cdot \nabla, \Delta_j \big] \delta \al \\[1ex]
\partial_t \Delta_j (\delta \bt) + (\al \cdot \nabla) \Delta_j (\delta \bt) + \Delta_j \P (\delta \al \cdot \nabla) v =
\Delta_j \big[ \al \cdot \nabla, \P \big] \delta \bt + \big[ \al \cdot \nabla, \Delta_j \big] \delta \bt\,.
\end{cases}
\end{equation}
We now perform $L^\infty$ estimates on that system. 
By using the commutator estimates of Lemmas \ref{l:CommBCD} and \ref{l:prodEstimates} and the bounds of Corollary \ref{c:prodEstimates2},
we get
\begin{equation*}
2^j \|\Delta_j (\delta \al, \delta \bt) \|_{L^\infty} \lesssim 2^j \| \Delta_j b_0 \|_{L^\infty} + \int_0^t c_j(\tau) \|(\delta \al, \delta \bt) \|_{B^1_{\infty, 1}} \Big( \| v \|_{B^2_{\infty, 1}} + \| (\al, \bt) \|_{B^1_{\infty, 1}} \Big) {\rm d} \tau,
\end{equation*}
where $\big( c_j(\t) \big)_{j \geq -1}$ are sequences all belonging to the unit sphere of $\ell^1$. By applying the Minkowski inequality to this last inequality, we obtain
\begin{equation}\label{eq:intInequality1}
\| (\delta \al, \delta \bt) \|_{B^1_{\infty, 1}} \lesssim \| b_0 \|_{B^1_{\infty, 1}} + \int_0^t \| (\delta \al, \delta \bt) \|_{B^1_{\infty, 1}} \Big( \| v \|_{B^2_{\infty, 1}} +
\| (\al, \bt) \|_{B^1_{\infty, 1}} \Big) {\rm d} \tau\,.
\end{equation}

\paragraph{Step 3: final estimates.}
With the previous estimate \eqref{eq:intInequality1} at hand, we can conclude the proof of Theorem \ref{t:lifespan}.
In order to simplify the next computations, we define, for all $T > 0$, the quantities
\begin{equation*}
E(T) = \sup_{t \in [0, T]} \big\| (\delta \al (t), \delta \bt (t)) \big\|_{B^1_{\infty, 1}} \qquad \mbox{ and } \qquad \phi(T) = \sup_{t \in [0, T]} \| v(t) \|_{B^2_{\infty, 1}}\,.
\end{equation*}

For completing the proof, it remains us to estimate the $B^1_{\infty, 1}$ norm of the solution $(\al, \bt)$ and find an inequality for $E(t)$, since $\phi(t)$ is finite at every time $t > 0$.
Now, we observe that $\al = \delta \al + v$ and $\bt = \de\bt + v$, which implies
\[
 \left\| \big(\al(t), \bt(t)\big) \right\|_{B^1_{\infty, 1}}\,\leq\,E(t)\,+\,\phi(t)\,.
\]
Using this estimate in \eqref{eq:intInequality1}, we infer the integral inequality
\begin{equation*}
E(t) \lesssim E_0 + \int_0^t E(\tau) \Big( E(\tau) + \phi (\tau) \Big) {\rm d} \tau = E_0 + \int_0^t E^2(\tau) {\rm d} \tau + \int_0^t E(\tau) \phi(\tau) {\rm d} \tau.
\end{equation*}
To get rid of the linear part in this inequality, namely the last summand in the right-hand side, we start by using Gr\"onwall's lemma. Thus we get, for all $T > 0$,
the bound
\begin{equation*}
E(T) \lesssim \left( E_0 + \int_0^T E^2(t) {\rm d} t \right) e^{c T \phi (T)}\,,
\end{equation*}
where $c > 0$ is some irrelevant numerical constant. Next, in order to bound $E(T)$ on some time interval, we define the time $T^* > 0$ as
\begin{equation*}
T^* := \sup \left\{ T > 0\,\Big| \quad \int_0^T E^2(t) {\rm d} t \leq E_0 \right\}.
\end{equation*}
So, for all times $0 \leq T \leq T^*$, we must have the bound $E(T) \lesssim E_0 e^{c T \phi(T)}$. Therefore, by definition of $T^*$, we must have the inequality
\begin{equation*}
 e^{2c T^* \phi (T^*)}\, T^*\, \geq\, \frac{1}{E_0}\,.
\end{equation*}
This inequality alone proves that the time $T^*$ on which the solution is known to satisfy uniform $B^1_{\infty, 1}$ estimates is arbitrarily large if $E_0$ is made as small as necessary.
However, to find more quantitative inequalities, we need to use the upper bound for $\phi (T)$ provided by Lemma \ref{l:order2forVelovity}: we find that, for all $T \leq T^*$, one has
\begin{equation*}
\int_0^T E^2(t) \dt \lesssim \frac{E_0^2}{V_0} T V_0 \exp \Big\{c TV_0 \exp \left(c T V_0 e^{c T V_0} \right) \Big\}.
\end{equation*}
Thus, by definition of $T^*$, for $T=T^*$ we must have
\[
 \frac{E_0^2}{V_0} T^* V_0 \exp \Big\{c T^*V_0 \exp \left(c T^* V_0 e^{c T^* V_0} \right) \Big\}\,\geq\,E_0\,.
\]
By using the inequality $x \leq e^{x} - 1$ in the previous estimate and applying the logarithm function three times, we prove the theorem.

%%%%%%%%%%%%%%%%%%%%%%%%%%%%%%%%%%%%%%%%%%%%%%%%%%%%%%%%%%%%%%%%%%%%%%%%%%%%%%%%%%%%%%%%%%%%
%%%%%%%%%%%%%%%%%%%%%%%%%%%%%%%%%%%%%%%%%%%%%%%%%%%%%%%%%%%%%%%%%%%%%%%%%%%%%%%%%%%%%%%%%%%%
%\appendix

%\addcontentsline{toc}{section}{References}
{\small

}

\end{document}